\DeclareMathOperator{\zone}{zone}
\DeclareMathOperator{\INV}{Inv}
\newcommand{\suchthat}{\,|\,}
\newcommand{\set}[1]{\ensuremath{\{{#1}\}}}
\newcommand{\Inv}[2]{\ensuremath{\INV\langle #1 \suchthat #2 \rangle}}
\newcommand{\sch}{Sch\"ut\-zen\-ber\-ger}
\newcommand{\LETTERS}{\ensuremath{X\cup X^{-1}}}
\newcommand{\WORDS}{\ensuremath{(\LETTERS)^*}}
\newcommand{\si}{\sigma}
\newcommand{\bs}{\bar \sigma}
\newcommand{\ds}{d_{S\Gamma}}
\newcommand{\dd}{d_{\mathcal{D'}}}
\newcommand{\own}{\Omega}
\newcommand{\md}{\mathcal{D'}}
\newcommand{\app}{\sim_{ft}}
\newcommand{\gR}[0]{\ensuremath{\mathscr{R}}}
\newcommand{\pdap}[0]{\ensuremath{\mathscr{P}}}
\newtheorem{defn}{Definition}[section]
\newtheorem{prop}[defn]{Proposition}
\newtheorem{lemma}[defn]{Lemma}
\newtheorem{thm}[defn]{Theorem}
\newtheorem{cor}[defn]{Corollary}
\theoremstyle{remark}
\begin{document}

\title[Inverse monoids presented by a single sparse
  relator]{Decision problems for inverse monoids
  presented by a single sparse relator}

\author[Susan Hermiller]{Susan Hermiller}
\address{Susan Hermiller\\
Department of Mathematics\\
        University of Nebraska\\
         Lincoln NE 68588-0130, USA}
\email{smh@math.unl.edu}

\author[Steven Lindblad]{Steven Lindblad}
\address{Steve Lindblad\\
         Hewitt Associates LLC\\
         45 South 7th Street, Suite 2100, Minneapolis MN 55402, USA}
\email{splindblad@gmail.com}

\author[John Meakin]{John Meakin}
\address{John Meakin\\
Department of Mathematics\\
        University of Nebraska\\
         Lincoln NE 68588-0130, USA}
\email{jmeakin@math.unl.edu}

\begin{abstract}

We study a class of inverse monoids of the form $M=\Inv{X}{w=1}$,
where the single relator $w$ has a combinatorial property that we
call {\bf sparse}.  For a sparse word $w$, we prove that the word
problem for $M$ is decidable. We also show that the set of words
in $\WORDS$ that represent the identity in $M$ is a
deterministic context free
language, and that the set of geodesics in the \sch\ graph of the
identity of $M$ is a regular language.

\end{abstract}

\maketitle

\begin{center}

Dedicated to the memory of Douglas Munn

\end{center}

\section{Introduction}\label{sec:intro}

In a seminal paper in 1974, Douglas Munn \cite{MR50:13328}
introduced the notion of birooted edge labeled trees
(subsequently referred to as ``Munn trees") to solve the word
problem for the free inverse monoid. Munn's work was extended by
Stephen \cite{MR91g:20083} who introduced the notion of \sch\
graphs to study presentations of inverse monoids. The \sch\ graphs
of an inverse monoid presentation are  the strongly connected
components of the Cayley graph of the presentation (or
equivalently the restrictions of the Cayley graph to the
$\gR$-classes of the monoid). From a \sch\ graph for an inverse
monoid presentation, the corresponding \sch\ complex can be
defined as the 2-complex whose 1-skeleton is the \sch\ graph and
whose faces have boundaries labeled by the sides of relations
\cite{MR1971670}.

One-relator inverse monoids of the form $M=\Inv{X}{w=1}$, where $w
\in \WORDS$,  have received some attention in the literature.
Birget, Margolis, and Meakin \cite{MR94j:20057} proved that the
word problem is solvable for inverse monoids of the form
$\Inv{X}{e=1}$, where $e$ is an idempotent in the free inverse
monoid (i.e., reduces to $1$ in the free group). Stephen
\cite{MR93k:20095}  observed that if the inverse monoid
$M=\Inv{X}{w=1}$ is $E$-unitary, then the word problem for $M$ is
decidable if there is an algorithm to decide, for any word $u \in
\WORDS$, whether or not $u=1$ in $M$.  Furthermore, Ivanov,
Margolis, and Meakin \cite{MR2001m:20107} proved that if $w$ is
cyclically reduced, then $M=\Inv{X}{w=1}$ is $E$-unitary.  Thus
the word problem for $M=\Inv{X}{w=1}$, $w$ cyclically reduced, is
reduced to understanding the \sch\ graph of $1$ in $M$. This has
been used to solve the word problem in several special cases (see
for example the paper by Margolis, Meakin and \v Suni\'k
\cite{MMSu}), but the problem remains open in general, even if $w$
is a cyclically reduced word.

 The
present paper is concerned with a class of one-relator inverse
monoids of the form $M=\Inv{X}{w=1}$ where $w \in \WORDS$ satisfies
a combinatorial condition that enables us to understand the
structure of the \sch\ complex corresponding to the identity of $M$.

Let $w=a_0 \cdots a_{n-1}$ with each $a_i$ in $X \cup X^{-1}$. A
{\bf cyclic subword} $q=w(i,j,\epsilon)$ of $w$ is a nonempty word
in $\WORDS$ of length at most $n-1$ of the form $q=a_{{i}}
a_{{i+1}} a_{i+2} \cdots a_{{j-1}}$ if $\epsilon=1$ and
$q=a_{{i-1}}^{-1} a_{{i-2}}^{-1} a_{i-3}^{-1} \cdots a_{{j}}^{-1}$
if $\epsilon=-1$, where $i,j \in {\mathbb Z}/n{\mathbb Z}$. The {\bf
zone} of the cyclic subword $q=w(i,j,\epsilon)$ is the subset of
${\mathbb Z}/n{\mathbb Z}$ given by
$\zone(q):=\{i,i+\epsilon,i+2\epsilon,...,j\}$.

\begin{defn}\label{sparsedefn}
A word $w \in \WORDS$ is {\bf sparse} if $w$ is freely reduced,
$l(w)>1$, and whenever
$(q_k,q_k')=(w(i_k,j_k,\epsilon_k),w(i_k',j_k',\epsilon_k'))$ are
two pairs of cyclic subwords of $w$ satisfying $q_k = q_k'$ in
$\WORDS$, $\zone(q_k) \neq \zone(q_k')$ and $0 \in \zone(q_k')$ for
$k=1,2$, then

\begin{description}

\item[(sparse 1)] $\zone(q_1) \cap \zone(q_2')=\emptyset = \zone(q_1') \cap
\zone(q_2)$, and

\item[(sparse 2)] either $\zone(q_1) \cap \zone(q_2)=\emptyset$ or both
$\epsilon_1 \epsilon_1' = \epsilon_2 \epsilon_2'$ and
   $i_1 - \epsilon_1 \epsilon_1' i_1' = i_2 - \epsilon_2 \epsilon_2' i_2' \mod n$.

\end{description}

\end{defn}

For example one may see easily from this definition that the word
$w = aba^{-1}b^{-1}cdc^{-1}d^{-1}$ and all of its cyclic
conjugates are sparse. However the word $w = aba^{-1}b^{-1}$ is
not sparse. To see this, note that if $q_1 = w(3,2,-1), q_1' =
w(0,1,1), q_2 = w(1,2,1)$ and $q_2' = w(0,3,-1)$, then $q_1 = q_1'
= a$ in $\WORDS$ (where $X = \{a,b,c,d\}$) and $q_2 = q_2' = b$ in
$\WORDS$, but $1 \in \zone(q_1') \cap \zone(q_2)$.

Roughly speaking, if $w$ is a sparse word, then distinct
occurrences of prefixes and suffixes of $w$ that occur
elsewhere as cyclic subwords of $w$ are separated by at least one
letter. This enables us to define an appropriate notion of a dual
graph in the \sch\ complex of $1$ and to prove that this dual
graph is a tree. From this, we can encode the information
contained in the \sch\ complex of $1$ in a pushdown automaton. We
can also show that the faces of this \sch\ complex are of finitely
many types and use this to analyze geodesics and cone types in the
\sch\ graph of $1$. Specifically, we can prove the following
theorems.

\begin{thm}\label{wpsolved}
If $w\in\WORDS$ is sparse, then the word problem for
$M=\Inv{X}{w=1}$ is solvable.
\end{thm}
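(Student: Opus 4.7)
The plan is to follow the standard pipeline for one-relator inverse monoids with a cyclically reduced relator, and then to exploit the fact that the sparse hypothesis forces the \sch\ graph of $1$ to admit a tree-like dual structure.

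First I would check that every sparse $w$ is cyclically reduced: otherwise the single-letter cyclic subwords $q_1=w(0,1,1)$ and $q_1'=w(0,n-1,-1)$ would both equal the first letter of $w$, but their zones $\{0,1\}$ and $\{0,n-1\}$ share the index $0$, violating (sparse~1) applied with $(q_2,q_2'):=(q_1,q_1')$. Hence by Ivanov--Margolis--Meakin \cite{MR2001m:20107} the monoid $M=\Inv{X}{w=1}$ is $E$-unitary, and by Stephen's observation \cite{MR93k:20095} it suffices to decide, for an arbitrary input $u\in\WORDS$, whether $u$ labels a closed path at the base vertex of the \sch\ graph $S\Gamma$ of $1$ in $M$.

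Next I would construct $S\Gamma$ via Stephen's iterative procedure, which alternates free-group reductions of the Munn tree of $u$ with sewings of a fresh $w$-labeled cycle whenever a proper cyclic subword of $w$ already labels a closed path. The resulting \sch\ complex has one 2-cell per $w$-cycle, and the sparse conditions are designed precisely to control how two such cells can overlap: (sparse~1) says distinct cyclic occurrences of a prefix or suffix are separated on the boundary of $w$, while (sparse~2) says that when two cells do share an arc the gluing is rigidly determined by the cyclic-subword data. From this I would deduce that the dual graph $\pdap$, with one vertex per 2-cell and one edge per nontrivial shared arc, is a tree; in particular no edge of $S\Gamma$ is on the boundary of three or more distinct 2-cells.

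With $\pdap$ a tree I would encode $S\Gamma$ as a pushdown automaton $\mathcal A$ that reads $u$ letter by letter while simulating a walk in $S\Gamma$; the stack records the path in $\pdap$ from a fixed root cell down to the 2-cell currently being traversed, together with the local position on the boundary of that cell. The tree property ensures the walk on $\pdap$ evolves by single pushes and pops, and since all 2-cells are copies of the $w$-cycle the transition table is finite. The word $u$ represents $1$ in $M$ iff $\mathcal A$ ends at the base vertex with empty stack after reading all of $u$, and since membership in a pushdown language is decidable the word problem for $M$ is solvable. The hard part is the tree property of $\pdap$: this is where both clauses of Definition~\ref{sparsedefn} must be translated from arithmetic statements about cyclic subwords into a genuine acyclicity argument for the dual graph. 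Everything else is either classical (the reduction in the first paragraph) or routine (the pushdown construction) once the local structure around each 2-cell is pinned down.
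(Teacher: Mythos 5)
Your outline is correct in its first two stages --- the reduction via cyclic reducedness, $E$-unitarity \cite{MR2001m:20107} and Stephen's observation \cite{MR93k:20095}, and the tree property of the dual graph are exactly the paper's Lemma \ref{cycredlemma}, the discussion in Section \ref{sec:intro}, and Proposition \ref{dualgraphisatree} --- but your final stage diverges from the paper's proof of Theorem \ref{wpsolved}. The paper does \emph{not} prove decidability via the pushdown automaton; it gives a direct finite-ball argument: run the \sch\ approximation sequence, always attaching a face at a vertex of minimal distance from $v_0$, and stop when the next attachment site is at distance $L\cdot l(w)+1$, where $L=l(u)$. The Geodesic Theorem \ref{geodesicprop} (geodesics from $v_0$ descend the dual tree, passing through at most one new face per edge of the tree) then shows that this finite complex $S$ contains every vertex within distance $L$ of $v_0$ and is isometrically embedded in $S\Gamma(1)$, so $u=1$ in $M$ iff $u$ closes up at $v_0$ inside $S$. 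This is manifestly effective. The PDA is reserved for Theorems \ref{cfree} and \ref{geodesics}.

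Your route can be made to work, but two points deserve flagging. First, the PDA construction is less routine than you suggest: the states and stack symbols must be equivalence classes of vertices under a ``face type'' relation recording where the gluing path $\gamma(A)$ of the owning face sits on $\partial A$, and the well-definedness of the transition function on these classes (Lemma \ref{facetypelemma} in the paper) is a genuine argument resting on Lemma \ref{halflemma}, the Geodesic Theorem, and the furthest-point Lemma \ref{furthest}; ``local position on the boundary of the current cell'' alone does not determine the transitions. Relatedly, your dual graph should have an edge for \emph{every} parent--child attachment, including those where no boundary edges are glued (the shared arc is a single vertex); restricting to nontrivially shared arcs would disconnect the tree and break the push/pop bookkeeping. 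Second, your concluding step ``membership in a pushdown language is decidable'' yields an algorithm only if the PDA is effectively computable from $w$; its transition table is defined in terms of the infinite complex $SC(1)$, and showing that a finite, computable amount of $SC(1)$ determines it is essentially the same bounding argument the paper makes explicitly in its ball construction (the paper relegates the effective construction of the PDA to Lindblad's thesis in Remark 2). Since Theorem \ref{wpsolved} asserts solvability for each fixed sparse $w$, mere existence of the PDA does suffice logically, but if you want an explicit decision procedure you should either carry out the effective construction of the automaton or fall back on the paper's truncated-approximation argument.
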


\begin{thm}\label{cfree}
Let $w$ be sparse and let $M=\Inv{X}{w=1}$. Then:
\begin{enumerate}
\item The language of words equal to $1$ in $M$ is
deterministic context-free.

\item The
language of words related to $1$ by Green's relation $\gR$  in $M$
is deterministic context-free.
\end{enumerate}
\end{thm}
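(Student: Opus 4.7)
The plan is to exploit the tree structure of the dual graph of the Schützenberger complex of the identity, together with the finite list of face types, to build deterministic pushdown automata for each language; both structural facts are established earlier in the paper as consequences of sparsity. In the intended simulation, the finite control of the DPDA records a current face (up to its type) together with a current vertex within it, while the stack encodes, via a finite alphabet of (face-type, entry-edge) symbols, the unique path in the dual tree from the current face back to the face containing the basepoint.

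For part (2), I use the characterization that $u\,\gR\,1$ in $M$ iff $uu^{-1}=1$, equivalently iff $u$ labels a walk in $S\Gamma(1)$ starting at the basepoint; since $uu^{-1}\leq pp^{-1}\leq 1$ for every prefix $p$ of $u$, the condition $uu^{-1}=1$ forces $pp^{-1}=1$ for all prefixes, making the walk well defined. The DPDA $A_{\gR}$ simulates this walk letter-by-letter. On reading $x$, the state together with the top of the stack determines whether the putative $x$-edge out of the current vertex (i) remains inside the current face, updating only the state; (ii) crosses into a face not previously visited on the current dual-tree path, triggering a push recording the new face-type and entry edge; (iii) crosses back across the current face's entry edge, triggering a pop; or (iv) does not exist in $S\Gamma(1)$, sending the automaton into a permanent reject state. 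The tree property of the dual graph makes the parent direction unique, so exactly one of these cases applies and the machine is deterministic; finiteness of the list of face types keeps the stack alphabet finite. All non-rejected inputs are accepted, yielding exactly the $\gR$-class of $1$.

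For part (1), I reuse the same machine with a stricter acceptance condition. Since $M$ is $E$-unitary (which for a cyclically reduced relator follows from Ivanov--Margolis--Meakin), $u=1$ in $M$ iff $u$ labels a closed walk at the basepoint of $S\Gamma(1)$. The DPDA $A_1$ accepts iff, upon exhausting the input, the stack is in its initial configuration and the state records that the current vertex is the basepoint. The main technical hurdle, shared by both parts, is to verify that the data stored locally in the state and at the top of the stack always suffice to decide deterministically which of the four cases applies and, in case (ii), to identify the type of the incoming face; this is precisely what the structural analysis of the Schützenberger complex enabled by sparsity is expected to deliver, with the tree property supplying uniqueness of the parent direction for pops and the finite list of face types bounding both the state set and the stack alphabet.
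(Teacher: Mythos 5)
Your proposal is correct and follows essentially the same route as the paper: the paper's PDA $\pdap$ (Definition \ref{pdadefn}) has exactly the architecture you describe --- states given by face-type classes of vertices, stack symbols recording the $\bar\sigma$-vertices along the dual-tree path back to $v_0$, pushes/pops on crossing into a child/parent face, with acceptance at $[v_0]$ for part (1) and all states accepting for part (2) --- and the ``main technical hurdle'' you identify (that local face-type data determines the transitions deterministically) is precisely the content of the paper's Lemma \ref{facetypelemma}. One minor note: the equivalence ``$u=1$ in $M$ iff $u$ labels a closed walk at the basepoint of $S\Gamma(1)$'' is a general fact about \sch\ automata of the identity and does not require the $E$-unitarity of $M$ that you invoke.
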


\begin{thm}\label{geodesics}
If $w$ is a sparse word and $M=\Inv{X}{w=1}$, then the language of
geodesics in the \sch\ graph of $1$ for $M$ (i.e. the language of
words labeling geodesic paths starting at $1$ in $S\Gamma(1)$) is
a regular language.
That is, the \sch\ graph of $1$ has finitely many cone types.
\end{thm}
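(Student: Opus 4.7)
The plan is to show directly that the \sch\ graph $S\Gamma(1)$ has only finitely many cone types, from which the regularity of the language of geodesics starting at $1$ follows by the standard construction of a finite automaton whose states are the cone types and whose transitions are labeled by letters of $\LETTERS$. I propose to exploit two structural facts that the paper has already announced for sparse $w$: the dual graph of the \sch\ complex of $1$ is a tree, and the faces of this complex fall into only finitely many combinatorial types.

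First, I would fix the setup by recalling Stephen's iterative construction, so that the \sch\ complex $S\Delta(1)$ is a $2$-complex built by iteratively attaching disks whose boundary is labeled by $w$. Each such disk is called a face; the dual graph has a vertex per face and an edge whenever two faces share a boundary arc, and the sparse hypothesis makes this dual graph a tree. Each vertex $v$ of $S\Gamma(1)$ lies on only finitely many faces, and I would define the local type of $v$ to be the isomorphism class of its star in $S\Delta(1)$ together with, for every incident face, the position of $v$ on the corresponding cyclic word $w$. Because there are only finitely many face types and the incidence is bounded, this invariant takes only finitely many values.

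Next, I would prove that the cone type of $v$ in $S\Gamma(1)$ is determined by its local type. The key point is to show that whether a path $p$ starting at $v$ is geodesic can be decided from local information along $p$. I would analyze how $p$ traverses faces by translating it into a walk in the dual tree, together with a choice of boundary arc on each face visited. A non-geodesic path admits a strictly shorter path with the same endpoints, and the tree structure of the dual graph should force any such shortcut to be realized locally, by replacing an arc of length $k$ on a single face by its complementary arc of length $n-k$ and then freely reducing across finitely many adjacent faces. The sparse condition, which says that repeated prefixes and suffixes of $w$ are well-separated, is precisely what prevents unexpected long-range identifications, so that two vertices with the same local type yield the same set of outgoing geodesic continuations.

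The main obstacle will be making this local detection of non-geodesicity precise: even in tree-like $2$-complexes, a path may admit a shorter representative only after a sequence of modifications, and a priori these modifications could propagate arbitrarily far. To handle this, I would introduce a notion of elementary $w$-refinement of a path (replacing an arc along one face by the complementary arc) and prove, using the tree dual and the bounded overlap controlled by \emph{(sparse 1)} and \emph{(sparse 2)}, that any path admits a shortest representative reachable by refinements localized within a uniformly bounded neighborhood of the original path. Once this is established, the letter-by-letter transition between cone types is determined by the local type of the current vertex and the letter read, giving a finite-state deterministic automaton whose accepted language is exactly the set of words labeling geodesics from $1$ in $S\Gamma(1)$; this both proves regularity and yields finitely many cone types, completing the theorem.
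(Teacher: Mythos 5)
Your overall architecture --- a finite automaton whose states are finitely many ``types'' of vertices, exploiting the tree structure of the dual graph and the finiteness of face types --- is the same as the paper's, but the invariant you propose is too weak to determine cone types. Which outgoing edges of a face $A$ increase the distance to the basepoint depends on where the point of $\partial A$ farthest from $v_0$ sits, and that in turn depends on the difference $\ds(v_0,\rho(A))-\ds(v_0,\phi(A))$ between the distances to the two endpoints of the arc along which $A$ is glued to its parent. This is a global quantity, invisible in the star of a vertex: two faces can be attached to their parents in combinatorially identical ways, so that corresponding vertices have isomorphic stars and identical positions on $w$, and yet have their farthest points at different indices, hence genuinely different sets of geodesic continuations. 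The paper repairs exactly this by adding to the face type a third coordinate $k_A$ recording the index of the farthest point $x_A$ (Lemma~\ref{furthest}), and by proving in Lemma~\ref{facetypelemma} that $k_B$ for a child face $B$ is computable from $k_A$ of the parent together with the attaching data, so the enriched type still takes only finitely many values and propagates coherently down the dual tree. You would need to add this directional datum to your ``local type'' and prove the analogous propagation statement; without it the claim that local type determines cone type is false as stated.

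The second gap is your mechanism for recognizing geodesics. You propose to detect non-geodesicity via ``elementary $w$-refinements'' and to prove that any shortcut is realized by refinements confined to a uniformly bounded neighborhood; you correctly identify this localization as the main obstacle, but it is left entirely unproved, and it is not the route the paper takes. The paper instead uses the elementary fact that an edge path from $v_0$ is geodesic if and only if every edge strictly increases $\ds(v_0,\cdot)$, and then shows this is decidable from the enriched type: an edge descending into a child face always increases the distance (because the gluing arc has length at most $\frac{1}{2}l(w)-1$, Lemma~\ref{halflemma}), an edge returning toward a parent face never occurs on a geodesic (Theorem~\ref{geodesicprop}), and an edge within a face increases the distance exactly when it moves the index toward $k_A$. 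No path-rewriting or bounded-propagation argument is needed; establishing your localization claim would essentially amount to re-deriving these same structural facts in a more awkward form.
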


In Section \ref{sec:sparsewords} of the paper we study some
properties of sparse words that enable us to understand how
$n$-gons whose boundaries are labeled by a sparse word may fold
together. Section \ref{sec:sc1} provides information about
sequences of complexes that are used to approximate the \sch\
complex of $1$ for an inverse monoid with sparse relator. Section
\ref{sec:wp} introduces a notion of dual graph to the \sch\
complex of $1$ and this is exploited to provide a proof of Theorem
\ref{wpsolved}. In Section \ref{sec:PDA} we introduce a pushdown
automaton that encodes the information contained in the \sch\
complex of $1$ for a one-relator monoid corresponding to a sparse
word, and we use this to provide a proof of Theorem \ref{cfree}.
We also make use of these results to construct a finite state
automaton that accepts the geodesics in the \sch\ graph of $1$ for
our monoid, and thus provide a proof of Theorem \ref{geodesics}.

We refer the reader to the book of Lawson \cite{MR2000g:20123} for
much of the basic theory of inverse semigroups and to the
paper by Stephen \cite{MR91g:20083} for foundational ideas
and notation about presentations of inverse monoids.

\section{Sparse Words}\label{sec:sparsewords}

Throughout this section, $w=a_0 \cdots a_{n-1}$ will denote a
fixed sparse word in $\WORDS$ as defined in Definition
\ref{sparsedefn} above.

\begin{lemma}\label{cycredlemma}
Every sparse word in $\WORDS$ is cyclically reduced.
\end{lemma}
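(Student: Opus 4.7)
The plan is to argue by contradiction. Assume $w = a_0 \cdots a_{n-1}$ is sparse but not cyclically reduced. Since the definition of sparseness already forces $w$ to be freely reduced, the only remaining obstruction to cyclic reduction is $a_0 = a_{n-1}^{-1}$. Moreover $n \geq 3$, because for $n = 2$ the conditions ``freely reduced'' and ``cyclically reduced'' coincide, so the length-$2$ case is vacuous.

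I would then exhibit the single letter $a_0$ as a cyclic subword of $w$ in two essentially different ways and use this to violate condition (sparse 1). Set $q_1 = w(0,1,1)$; this is the one-letter word $a_0$, with $\zone(q_1) = \{0,1\}$. For a second occurrence, use the hypothesis $a_{n-1}^{-1} = a_0$ together with the $\epsilon = -1$ branch of the definition: set $q_1' = w(0,n-1,-1)$, which unpacks to the one-letter word $a_{-1}^{-1} = a_{n-1}^{-1} = a_0$ and has $\zone(q_1') = \{0,n-1\}$. Because $n \geq 3$, the zones $\{0,1\}$ and $\{0,n-1\}$ are genuinely distinct, and $0 \in \zone(q_1')$, so the pair $(q_1,q_1')$ satisfies all the hypotheses of Definition \ref{sparsedefn}. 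Taking the second required pair to be $(q_2,q_2') = (q_1,q_1')$, condition (sparse 1) demands $\zone(q_1) \cap \zone(q_1') = \emptyset$; but $0$ lies in both zones, a contradiction.

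The only real hazard in carrying this out is getting comfortable with the indexing and direction conventions for $\epsilon = -1$ cyclic subwords and their zones; once that bookkeeping is in place the argument is immediate from (sparse 1). If one prefers to read ``two pairs'' as meaning two distinct pairs, a symmetric second pair spelling $a_{n-1} = a_0^{-1}$ does the job: take $q_2 = w(n-1,0,1)$ with zone $\{n-1,0\}$ and $q_2' = w(1,0,-1)$ with zone $\{1,0\}$, and observe that $\zone(q_1) \cap \zone(q_2') = \{0,1\}$ is nonempty, again violating (sparse 1).
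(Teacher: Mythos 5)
Your argument is correct and is essentially the paper's own proof: both exhibit the letter $a_0$ as a one-letter cyclic subword in two ways (once read forward from index $0$, once as $a_{n-1}^{-1}$ via the $\epsilon=-1$ convention), take the second pair equal to the first, and derive a contradiction with (sparse 1) from $0$ lying in both zones. Your extra care about the $n=2$ case and the optional genuinely distinct second pair are fine but not needed.
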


\begin{proof}
Let $w=a_0 \cdots a_{n-1}$ be a sparse word and suppose that $a_0
= a_{n-1}^{-1} = a$. If we let $q_1 = w(0,-1,-1), q_1' = w(0,1,1),
q_2 = w(0,-1,-1)$ and $q_2' = w(0,1,1)$, then $q_1 = q_1' = q_2 =
q_2' = a$, but $0 \in \zone(q_1) \cap \zone(q_2')$. This contradicts
condition ({\bf sparse 1}) of Definition \ref{sparsedefn}, so $w$
must be cyclically reduced.
\end{proof}

\begin{lemma}\label{primitivelemma}
Every sparse word $w \in \WORDS$ is primitive (i.e. $w$ is not a
proper power in $\WORDS$).
\end{lemma}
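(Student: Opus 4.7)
The plan is to proceed by contradiction, applying condition (sparse 1) from Definition \ref{sparsedefn} to a pair of equal cyclic subwords occurring at shifted positions. Suppose $w = v^k$ with $v \in \WORDS$ and $k \geq 2$; set $m := l(v) \geq 1$, so that $n = km$ and $a_{i+m} = a_i$ for every index modulo $n$. Sparseness forces $w$ to be freely reduced with $l(w) > 1$, which rules out $(k,m) = (2,1)$ (which would produce the non-reduced word $aa$) and hence gives $n \geq 3$.

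The heart of the argument is to exhibit a single pair that already violates (sparse 1) when used as both of the required pairs in Definition \ref{sparsedefn}. I set
\[
q_1' := w(0,m,1), \qquad q_1 := w(m,2m,1),
\]
each of which spells $v$ by periodicity, and then apply the definition with the degenerate choice $(q_2,q_2') := (q_1,q_1')$. The hypotheses---$q_k = q_k'$ in $\WORDS$, $0 \in \zone(q_k') = \{0,1,\ldots,m\}$, and $\zone(q_1) \neq \zone(q_1')$---are all immediate; the only small subtlety is the case $k = 2$, where $\zone(q_1) = \{m,\ldots,n-1,0\}$ wraps through $0$, but since $m \geq 2$ in that case the index $1$ still lies in $\zone(q_1') \setminus \zone(q_1)$, so the two zones differ.

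Condition (sparse 1) then forces $\zone(q_1) \cap \zone(q_1') = \emptyset$, which contradicts the obvious fact that $m$ lies in both $\zone(q_1) = \{m,\ldots,2m\}$ and $\zone(q_1') = \{0,1,\ldots,m\}$. The main obstacle here is really conceptual rather than mathematical: one must read Definition \ref{sparsedefn} as a universal statement that does not require the two chosen pairs to be distinct. If instead one insists on distinct pairs, an essentially identical argument works by replacing $(q_2,q_2')$ with a second shifted occurrence (for instance $(w(2m,3m,1), w(0,m,1))$ when $k \geq 3$, or a shifted single-letter pair such as $(w(m,m+1,1), w(0,1,1))$ when $k = 2$), and one checks directly that (sparse 1) is again violated by an index in the intersection.
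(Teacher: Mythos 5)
Your argument is essentially the paper's proof: both exhibit two shifted occurrences of the period as equal cyclic subwords and feed a single pair into Definition \ref{sparsedefn} twice to contradict (\textbf{sparse 1}). (The paper takes $q_1=w(0,l(u),1)$, $q_1'=w(-l(u),0,1)$ and $(q_2,q_2')=(q_1',q_1)$, so that $\zone(q_1)\cap\zone(q_2')=\zone(q_1)\neq\emptyset$ is immediate; your choice $(q_2,q_2')=(q_1,q_1')$ with the overlap at the index $m$ is an equally valid variant.) Your verification of the hypotheses, including the check that $\zone(q_1)\neq\zone(q_1')$ when $k=2$ and $m\geq 2$, is correct, and your reading of the definition as allowing the two pairs to coincide matches the paper's own usage.

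The one genuine error is your disposal of the case $(k,m)=(2,1)$: the word $aa$ \emph{is} freely reduced (free reduction only forbids adjacent inverse pairs $xx^{-1}$ or $x^{-1}x$), so it is not excluded by the requirement that a sparse word be freely reduced with $l(w)>1$. Moreover this case cannot be repaired by choosing different subwords, including the alternatives you list at the end: when $n=2$ every cyclic subword is a single letter and every zone is all of $\mathbb{Z}/2\mathbb{Z}$, so the hypothesis $\zone(q_k)\neq\zone(q_k')$ is never satisfiable and no contradiction with (\textbf{sparse 1}) is available --- $w=aa$ satisfies Definition \ref{sparsedefn} vacuously. To be fair, the paper's one-line proof silently breaks at exactly the same point (for $w=aa$ its chosen pair also has $\zone(q_1)=\zone(q_1')$), so this is a degenerate case that really needs to be excluded by the definition or the statement (e.g.\ by requiring $n\geq 3$) rather than something the shifted-subword argument can reach; you should flag it honestly rather than assert the false statement that $aa$ is not reduced.
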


\begin{proof}
Suppose that $w = u^m$ in $\WORDS$ for some $m > 1$. The word $u$
has length $l(u) > 0$ since $l(w) > 0$. If we let $q_1 =
w(0,l(u),1) = q_2'$ and $q_1' = w(-l(u),0,1) = q_2$, we again
immediately obtain a contradiction of ({\bf sparse 1}).
\end{proof}

We will build 2-dimensional CW-complexes using information from
the sparse word $w$ to define the attaching maps. To start, let
$P$ be a polygon with $n$ sides; that is, $P$ is a $CW$-complex
with $n$ vertices, $n$ edges and a single 2-cell. We designate a
distinguished vertex $\sigma(P)$ of $P$.  We orient the edges of
$P$ in a clockwise direction, and label the edges of $P$ so that
$w$ is read clockwise from $\sigma(P)$ to $\sigma(P)$ on the
boundary $\partial P$. In addition, we label the vertices of $P$
by the elements of $\mathbb{Z}/n\mathbb{Z}$, starting with 0 at
$\sigma(P)$ and labeling in order also in the clockwise
direction.

We will build finite $2$-complexes iteratively from the $n$-gon
$P$ by successively attaching new copies of $P$ at existing
vertices and applying certain edge foldings. More specifically,
given a finite collection of copies $F_1$,$F_2$,...,$F_m$  of $P$,
first attach the vertex $\sigma(F_2)$ to any vertex of $F_1$ other
than $\sigma(F_1)$.  At the glued vertex $v$, if there are two
edges incident to $v$ with either (1) the same orientation and
edge label, or (2) opposite orientation and edge labels that are
inverse letters in $X \cup X^{-1}$, then we identify those edges
to a single 1-cell (and identify the vertices at the other ends to
a single vertex).  Repeat this successively at all of the vertices
of the complex until no further edge identification according to
rules (1)--(2) can be done, to obtain a new CW-complex with two
2-cells.  Denote the images of $F_1$ and $F_2$ in the quotient by
$\bar F_1$ and $\bar F_2$, respectively, and denote the image of
$\si(F_i)$ by $\bar \si(F_i)$ for $i=1,2$. At the $i$-th step, we
attach $F_i$ to the complex $\bar F_1 \cup \cdots \cup \bar
F_{i-1}$ by identifying $\sigma(F_i)$ with a vertex $v'$ other than
one of the $\bar \si(F_j)$ for $j<i$.  We again glue edges
according to rules (1)--(2) (where the orientation and label of
any edge incident to a face $\bar F_j$ can be considered to be
that inherited from $F_j$), to obtain a quotient CW-complex with
$i$ faces. (Note that at each step, the complex is finite, so this
process must stop.) We say that the face $F_i$ is {\bf folded
onto} $\bar F_1 \cup \cdots \cup \bar F_{i-1}$ {\bf at} $v'$, or
that $F_i$ is {\bf attached} at $v'$.

This process is repeated to create a CW-complex with images $\bar
F_1$,...,$\bar F_m$ of the original polygons as faces.  For any
index $j$ and vertex $v$ in $\bar F_j$, let $i(F_j,v)$ denote
the {\bf index} (or the set of indices) of the vertex (resp. vertices)
in $F_j$ that is sent to $v$ via the canonical map $F_j
\rightarrow \bar F_1 \cup \cdots \cup \bar F_m$.

Note that as a consequence of Lemma \ref{cycredlemma}, the two
edges of a single face $F$ incident to $\si(F)$ cannot be
identified to a single edge in this procedure. The definition of
sparse also implies restrictions on edge gluings in complexes
built from two or three faces, as the following lemmas
demonstrate.  These lemmas will be applied to determine the
structure of the Sch\"utzenberger complex of 1 in Section
\ref{sec:sc1}.

\begin{lemma} [\bf {The two-face lemma}]\label{twofacelemma}

Let $\bar F_1 \cup \bar F_2$ be the CW-complex obtained by folding one face $F_2$
onto another face $F_1$ at a vertex $v
\neq \sigma(F_1)$. Then $\bar \sigma(F_1) \notin \bar{F_1} \cap
\bar{F_2}$.

\end{lemma}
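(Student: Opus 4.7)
The plan is to argue by contradiction: assume $\bar\sigma(F_1) \in \bar F_1 \cap \bar F_2$. Since the folding process starts at the single identified vertex $\sigma(F_2) \sim v$ in $F_1$ and only propagates along adjacent matching edges (clockwise and counterclockwise around each polygon), the shared region $\bar F_1 \cap \bar F_2$ is a single connected sub-arc $A$ of both face boundaries. Reading $A$ in $F_1$ yields a cyclic subword $q_1$ of $w$, reading it in $F_2$ yields a cyclic subword $q_1'$ of $w$, and the folding rules force $q_1 = q_1'$ in $\WORDS$. By construction $v \in \zone(q_1)$ and $0 \in \zone(q_1')$; the contradictory hypothesis additionally gives $0 \in \zone(q_1)$.

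If $\zone(q_1) \neq \zone(q_1')$, I apply Definition \ref{sparsedefn} with the two pairs $(q_1, q_1')$ and $(q_2, q_2') := (q_1', q_1)$. Both pairs have equal cyclic subwords, distinct zones, and contain $0$ in the zone of their second component, so condition (sparse 1) yields $\zone(q_1) \cap \zone(q_1) = \emptyset$, contradicting $v \in \zone(q_1)$.

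The remaining case is $\zone(q_1) = \zone(q_1') = Z$. Here the folding defines a label-preserving graph isomorphism $\phi$ between the arcs $A \cap \partial F_2$ and $A \cap \partial F_1$ with $\phi(0) = v$, and $\phi$ is either a translation $m \mapsto m + v$ or a reflection $m \mapsto v - m$ of $Z$. The translation case forces $Z + v = Z$, which with $v \neq 0$ requires $|Z| = n$; then $a_m = a_{m+v}$ for all $m$ makes $w$ a proper power, contradicting Lemma \ref{primitivelemma}. The reflection case forces $a_m \cdot a_{v-m-1} = 1$ in the free group for every edge $(m, m+1)$ of $A$; since $Z$ is a reflection-invariant arc containing both $0$ and $v$, it has a unique central fixed point of the reflection, either a vertex (whose two adjacent reflection-paired edges carry consecutive letters of $w$ forced to be inverses, contradicting the free reducibility supplied by Lemma \ref{cycredlemma}) or an edge midpoint (whose fixed edge $(m, m+1)$ forces $a_m^2 = 1$, impossible in $\LETTERS$).

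The main obstacle is the subcase $\zone(q_1) = \zone(q_1')$, which sparseness alone cannot address; it requires combining the primitivity and cyclic reducibility of $w$ (Lemmas \ref{primitivelemma} and \ref{cycredlemma}) with a parity-based analysis of the two possible labeled-graph isomorphisms $\phi : A \cap \partial F_2 \to A \cap \partial F_1$.
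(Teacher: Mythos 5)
Your proof is correct, and its main case --- taking the arc of $\bar F_1 \cap \bar F_2$ joining $\bar\sigma(F_1)$ to $v$, reading it as cyclic subwords $q_1$ in $\partial F_1$ and $q_1'$ in $\partial F_2$, and applying condition ({\bf sparse 1}) to the two pairs $(q_1,q_1')$ and $(q_1',q_1)$ to get the contradiction $0\in\zone(q_1)\cap\zone(q_1)=\emptyset$ --- is exactly the paper's argument. The only divergence is that the paper applies Definition~\ref{sparsedefn} without remarking that it requires $\zone(q_1)\neq\zone(q_1')$, whereas you explicitly dispose of the equal-zones case via the translation/reflection dichotomy, primitivity (Lemma~\ref{primitivelemma}) and free reducedness; this is an extra degree of care on the same route rather than a different proof (your only loose end there is the phrase ``for all $m$'' in the translation case, since only $l(w)-1$ edge identifications are available, but the missing single relation is recovered because deleting one edge from a cycle of the shift-by-$c$ orbit graph leaves it connected).
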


\begin{proof}

Suppose to the contrary that $\bar \sigma(F_1) \in \bar{F_1} \cap
\bar{F_2}$. Since $F_2$ is folded onto the single face $F_1$,
there must be a path in $\bar{F_1} \cap \bar{F_2}$
from $\bar \sigma(F_1)$ to $v = \bar \sigma(F_2)$.
The preimage of this path under the map
$F_1 \rightarrow \bar F_1 \cup \bar F_2$ is a path
in $\partial F_1$ starting at the vertex $\sigma(F_1)$,
and so this path defines a cyclic subword $q_1$ of $w$ starting at vertex 0
when $w$ is viewed as a word labeling $\partial{F_1}$.  Similarly, this
path defines a cyclic subword $q_1'$ of $w$ ending at vertex $0$ when $w$ is
viewed as a word labeling $\partial{F_2}$. The two pairs of cyclic
subwords $(q_1,q_1')$ and $(q_2,q_2'):=(q_1',q_1)$ satisfy $0 \in
\zone(q_1) \cap \zone(q_2')$, contradicting Definition
\ref{sparsedefn}.
\end{proof}

\begin{lemma} [\bf {The three-face lemma}]\label{threefacelemma}
Suppose that the face $F_2$ is folded onto the face $F_1$ with at
least one pair of edges glued, and suppose that face $F_3$ is
folded onto a vertex $v \in \bar{F_1} \cap \bar{F_2}$. Then no
edges are glued via the folding process for $F_3$; that is, no
edge of $F_3$ can be glued to an edge of $\bar{F_1} \cup
\bar{F_2}$, and no two edges of $\bar{F_1} \cup \bar{F_2}$ are
identified.

\end{lemma}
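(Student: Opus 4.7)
The plan is to argue by contradiction: I would suppose that some edge identification occurs during the $F_3$-folding, extract two pairs of matching cyclic subwords of $w$, and show that one of the sparseness conditions of Definition~\ref{sparsedefn} must fail. All Stallings-style identifications in the folding process cascade outward from the initial vertex $\sigma(F_3)=v$, so the first identification must pair one of the two boundary edges of $F_3$ incident to $\sigma(F_3)$ with an edge of $\bar{F_1}\cup\bar{F_2}$ incident to $v$. The two choices (the outgoing $a_0$-edge at $\sigma(F_3)$ or the incoming $a_{n-1}$-edge) are handled analogously, so I would treat only the outgoing case.

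First I would extract the pair $(q_2,q_2')$ from the given $F_2$-onto-$F_1$ folding. Because at least one pair of edges is identified, the intersection $\bar{F_1}\cap\bar{F_2}$ contains a nontrivial subarc whose preimages in $\partial F_1$ and $\partial F_2$ are equal as cyclic subwords $q_2$ and $q_2'$ of $w$. Since $\sigma(F_2)$ lies in this arc we have $0\in\zone(q_2')$, while Lemma~\ref{twofacelemma} forces $0\notin\zone(q_2)$. Writing $j_k:=i(F_k,v)$ for $k=1,2$, the hypothesis $v\in\bar{F_1}\cap\bar{F_2}$ together with the attachment restriction $v\notin\{\bar{\sigma}(F_1),\bar{\sigma}(F_2)\}$ gives $j_1\in\zone(q_2)$, $j_2\in\zone(q_2')$, and $j_1,j_2\neq 0$.

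For the new pair $(q_1,q_1')$ I would set $q_1'=w(0,1,1)=a_0$ and let $q_1$ be the length-one cyclic subword labeling a preimage, in $F_1$ or $F_2$, of the first matched edge at $v$. If this matched edge can be chosen to lie in $\bar{F_2}$, I would pick $q_1$ from $F_2$, so that $j_2\in\zone(q_1)\cap\zone(q_2')$, a direct violation of condition (sparse 1). Otherwise the matched edge lies only in $\bar{F_1}\setminus\bar{F_2}$ and I would pick $q_1$ from $F_1$, so that $j_1\in\zone(q_1)\cap\zone(q_2)$; condition (sparse 2) then demands both $\epsilon_1\epsilon_1'=\epsilon_2\epsilon_2'$ and the equality of offsets $i_1-\epsilon_1\epsilon_1'i_1'\equiv i_2-\epsilon_2\epsilon_2'i_2'\pmod n$. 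Using that the $F_2$-to-$F_1$ gluing sends position $0\in F_2$ to position $j_0:=i(F_1,\bar{\sigma}(F_2))\in F_1$, a short computation reduces the offset equation to $j_1=j_0$, i.e., $v=\bar{\sigma}(F_2)$, contradicting the attachment restriction on $v$. If instead the sign-products fail to agree, the nonempty intersection $\zone(q_1)\cap\zone(q_2)$ by itself violates (sparse 2).

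The hardest part will be the bookkeeping of the sign conventions $\epsilon_k,\epsilon_k'$: there are sub-cases according to whether the first $F_3$-identification propagates clockwise or counter-clockwise out of $\sigma(F_3)$, and according to which of the four orientation patterns produced the initial $F_1$--$F_2$ gluing. In every combination, however, I expect the offset computation to collapse to forcing $v$ to coincide with some $\bar{\sigma}(F_j)$, yielding the first conclusion of the lemma. The second conclusion, that no two edges of $\bar{F_1}\cup\bar{F_2}$ are identified during the $F_3$-folding, is then automatic: any such cascade would have to begin with the identification of an $F_3$-edge with an edge of $\bar{F_1}\cup\bar{F_2}$, which the previous argument has ruled out.
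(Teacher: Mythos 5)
Your proposal is correct and follows essentially the same route as the paper's proof: extract one pair of matching cyclic subwords from the arc $\bar F_1 \cap \bar F_2$ (whose zones contain $\bar\sigma(F_2)$ and $v$ in the appropriate coordinates, with $0\notin\zone(q_2)$ by the two-face lemma), extract a second pair from the hypothetical first identification at $v$, and then invoke ({\bf sparse 1}) when the matched edge lies in $\bar F_2$ and the offset clause of ({\bf sparse 2}) -- which collapses to $i(F_1,v)=i(F_1,\bar\sigma(F_2))$, i.e.\ $v=\bar\sigma(F_2)$ -- when it lies only in $\bar F_1$. The only cosmetic differences are that you use a single edge rather than a maximal glued path for the second pair and that you should also note (via Lemma \ref{cycredlemma}) that the first identification cannot pair the two $F_3$-edges with each other.
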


\begin{proof}

By construction, $\bar F_1 \cap \bar F_2$ is a connected non-empty
edge path containing $\bar \sigma(F_2)$ and the vertex $v = \bar
\sigma(F_3)$, so there is a subpath $p_1$ of $\bar F_1 \cap \bar
F_2$ with endpoints $\bar \sigma(F_2)$ and $\bar \sigma(F_3)$.
When viewed as a path in $\partial{F_2}$, $p_1$ determines a
cyclic subword $q_1' = w(i_1', j_1', \epsilon_1')$ such that
$\zone(q_1')$ contains both $0=i(F_2,\bar \si(F_2))$ and the index $i(F_{2},v)$
of the vertex corresponding to $v$. When viewed as a path in
$\partial{F_1}$, $p_1$ determines a cyclic subword $q_1 =
w(i_1,j_1,\epsilon_1)$ such that $\zone(q_1)$ contains
$i(F_{1},\bar \sigma(F_2))$ and $i(F_{1},v)$.

Suppose that some edge of $F_3$ is glued onto an edge of $\bar F_1
\cup \bar F_2$.

\underline{Case 1}. $F_3$ folds onto an edge of $\bar F_1$. Then
there is a non-trivial path $p_2$ in $\bar F_1 \cap \bar F_3$ with
endpoint $v = \bar \sigma(F_3)$. When viewed as a path in
$\partial{F_3}$, $p_2$ determines a cyclic subword $q_2' =
w(i_2',j_2',\epsilon_2')$ with $0 \in \zone(q_2')$. When viewed as
a path in $\partial{F_1}$, $p_2$ determines a cyclic word $q_2 =
w(i_2,j_2,\epsilon_2)$ such that $i(F_{1},v) \in \zone(q_2)$. Then
$i(F_{1},v) \in \zone(q_1)
\cap \zone(q_2) \neq \emptyset$. But $i_1-\epsilon_1\epsilon_1'i_1'
= i(F_1,\bar \sigma(F_2))$ and $i_2-\epsilon_2\epsilon_2'i_2' =
i(F_1,v)$, so $i_1-\epsilon_1\epsilon_1'i_1' \neq
i_2-\epsilon_2\epsilon_2'i_2'$, contradicting condition ({\bf
sparse 2}) of Definition \ref{sparsedefn}. Thus Case 1 cannot
occur.

\underline{Case 2}. $F_3$ folds onto an edge of $\bar F_2$.
Then there is a non-trivial path $p_3$ in $\bar
F_2 \cap \bar F_3$ with endpoint $v = \bar \sigma(F_3)$. When
viewed as a path in $\partial{F_3}$, $p_3$ determines a cyclic
subword $q_3' = w(i_3',j_3',\epsilon_3')$ with $0 \in \zone(q_3')$.
When viewed as a path in $\partial{F_2}$, $p_3$ determines a
cyclic subword $q_3 = w(i_3,j_3,\epsilon_3)$ with $i(F_2,v) \in
\zone(q_3)$. In this case, $i(F_2,v) \in \zone(q_1') \cap \zone(q_3)
\neq \emptyset$, so condition ({\bf sparse 1}) fails, a
contradiction.

Since no edge of $F_3$ is folded onto any edge of $\bar{F_1} \cup
\bar{F_2}$, no additional edge folding can occur in $\bar{F_1}
\cup \bar{F_2}$.
\end{proof}

\section{The Sch\"utzenberger complex $SC(1)$}\label{sec:sc1}

Throughout this section, $w$ will denote a fixed sparse word and
$M=\Inv{X}{w=1}$. We recall that the \sch\ graph of $1$ for this
presentation is the restriction of the Cayley graph of $M$ to the
$\gR$-class of $1$. We denote this graph by $S\Gamma(1)$: its
vertices are the elements $s \in M$ such that $ss^{-1} = 1$ in $M$
and there is an edge  labeled by $x \in X \cup X^{-1}$ from $s$
to $t$ if $ss^{-1} = tt^{-1} = 1$ and $sx = t$ in $M$. We denote
this edge by $(s,x,t)$. Its inverse edge is the edge
$(t,x^{-1},s)$ in $S\Gamma(1)$, where we interpret $(x^{-1})^{-1}
= x$, and this inverse pair is interpreted as a
single topological edge.
The \sch\ complex of $1$ is the complex $SC(1)$
obtained from $S\Gamma(1)$ by adding a face with boundary label
$w$ for each closed path labeled by $w$ in $S\Gamma(1)$.
Stephen's iterative construction of a sequence of approximations
of the \sch\ graph $S\Gamma(1)$ may easily be adapted to yield a
sequence of approximations of the \sch\ complex $SC(1)$. In
particular, we may construct such a sequence of complexes in the
following way.

Start with a trivial complex $S_0$ consisting of one vertex $v_0$
and no edges or faces. Take a copy $F_1$ of the $n$-gon $P$,
identify its start vertex $\sigma(P)$ with $v_0$, and denote this
complex by $S_1$. As in Section \ref{sec:sparsewords}, we build a
sequence of complexes $S_1 = \bar F_1$, $S_2 = \bar F_1 \cup \bar
F_2$, $S_3 = \bar F_1 \cup \bar F_2 \cup \bar F_3$, ... by
successively folding faces $F_i$ onto $\bar F_1 \cup \bar F_2
\cup \cdots \cup \bar F_{i-1}$ at vertices $v_{i-1} \in S_{i-1}$ at which no
face has yet been attached, in such a way that $d(v_0,v_{i-1})$ is
as small as possible, where $d$ is the path metric in $S_{i-1}$.
Lemma \ref{primitivelemma} guarantees that such a $v_{i-1}$ exists. To
see this, note that if no such $v_{i-1}$ exists, then $S\Gamma(1) =
S_{i-1}$, so $S\Gamma(1)$ is finite. Thus if $x$ is the first
letter in $w$, since $x^j$ labels a path in $S\Gamma(1)$ for each
$j > 0$ we see that $x$ is a torsion element in $M$ (i.e. $x^{j} =
x^{k}$ for some $k \neq j$). It follows that $x$ must be a torsion
element of $G=Gp\langle X | w=1\rangle$, but $G$ is torsion free
if $w$ is primitive.

A sequence of complexes obtained in the above manner is referred
to as a {\bf \sch\ approximation sequence}. Since $v_i=\bar
\si(F_{i+1})$ is chosen so as to minimize the distance from $v_0$,
we can see that every vertex of $S_i$ is the start vertex of some
face in $S_{i+j}$ for some $j$. From the results of Stephen
\cite{MR91g:20083}, the corresponding sequence of $1$-skeleta of a
\sch\ approximation sequence has a direct limit  that is
independent of the choice of the vertices $v_i$, and this direct
limit is $S\Gamma(1)$. By an argument similar to the formal
category theoretical argument in \cite{MR91g:20083} used to show
this, it follows that the \sch\ approximation sequence of
complexes has a direct limit, and since the approximation sequence
attaches faces whenever a closed path labeled by $w$ is attached,
the limit of the \sch\ approximation sequence is the \sch\ complex
$SC(1)$.

\begin{thm}\label{thm:sc1}

Let $S_0,S_1,S_2, \ldots$ be any \sch\ approximation sequence for
$SC(1)$ corresponding to a sparse word $w$. Then for all $m \geq
0$ and for all distinct faces $\bar F_i,\bar F_j,\bar F_k,\bar F_l$ in $S_m$

\begin{enumerate}

\item The natural map $F_i \to \bar F_i$ is an embedding of $F_i$
into $S_m$.

\item If $\bar F_i \cap \bar F_j \neq \emptyset$, then $\bar F_i
\cap \bar F_j$ is a connected path
such that either
$\bar\sigma(F_i) \in \bar F_j$ with $\bar\sigma(F_j) \notin \bar F_i$,
or
$\bar\sigma(F_j) \in \bar F_i$ with $\bar\sigma(F_i) \notin \bar F_j$.

\item If $\bar F_i \cap \bar F_j \cap \bar F_k \neq \emptyset$,
then there exists $r \in \{i,j,k\}$ with $\bar F_i \cap \bar F_j
\cap \bar F_k = \bar \sigma(F_r)$ and $\bar F_r$ shares no other
vertices with the other two faces.

\item $\bar F_i \cap \bar F_j \cap \bar F_k \cap \bar F_l =
\emptyset$.

\item The natural map from $S_{m-1}$ to $S_m$ is an embedding.

\end{enumerate}

\end{thm}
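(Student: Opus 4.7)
The plan is to prove all five claims simultaneously by induction on $m$, using the two-face lemma (Lemma~\ref{twofacelemma}) and the three-face lemma (Lemma~\ref{threefacelemma}) as the main tools. The base cases $m=0$ and $m=1$ are immediate: $S_0$ has no faces, and $S_1=F_1$ is a single polygon that embeds because $w$ is cyclically reduced (Lemma~\ref{cycredlemma}). For the inductive step, assume the claims hold in $S_{m-1}$ and consider the attachment of $F_m$ at the vertex $v=v_{m-1}$, chosen so that $v\neq\bar\sigma(F_j)$ for any $j<m$. By inductive claim (3), if three previously attached faces contained $v$, then $v$ would have to be some $\bar\sigma(F_r)$, contradicting this choice; thus $v$ lies in at most two faces of $S_{m-1}$.

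In the two-face case $v\in\bar F_j\cap\bar F_k$, inductive claim (2) tells us $\bar F_j\cap\bar F_k$ is a connected path that also contains one of $\bar\sigma(F_j),\bar\sigma(F_k)$, so it contains at least one edge. The three-face lemma then applies to $(F_j,F_k,F_m)$, forbidding all edge gluings for $F_m$: $\bar F_m\cap S_{m-1}=\{v\}=\{\bar\sigma(F_m)\}$, from which claims (1)--(5) in $S_m$ are direct.

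In the one-face case, $v$ lies only in $\bar F_j$ and the fold of $F_m$ propagates along $\partial\bar F_j$. The crucial subclaim is that the fold does not extend from $\bar F_j$ into any other face $\bar F_l$ of $S_{m-1}$. If it did, there would be a transition vertex $v'\in\bar F_j\cap\bar F_l$ past which the fold identifies $\partial F_m$ with $\partial\bar F_l$; I would then set up two pairs of cyclic subwords of $w$ violating sparseness: pair~1 comes from the $F_m$--$F_j$ identification on the portion of the fold up to $v'$ (so $0\in\zone(q_1')$ because $\sigma(F_m)$ lies in that portion), and pair~2 comes from the intersection $\bar F_j\cap\bar F_l$, with primed/unprimed labels chosen so that $0\in\zone(q_2')$, which is possible because inductive claim (2) places a $\sigma$-vertex of $\bar F_j$ or $\bar F_l$ inside the intersection path. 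Sparse~1 is then violated at $v'$. Once confinement is established, the two-face lemma applied to $\bar F_j\cup\bar F_m$ gives $\bar F_j\cap\bar F_m$ as a connected path not containing $\bar\sigma(F_j)$, and $F_m$ embeds, proving (1), (2), and (5); claims (3) and (4) in $S_m$ for triples involving $\bar F_m$ follow by analyzing a common vertex $u\in\bar F_j\cap\bar F_m$: since $u$ is neither $\bar\sigma(F_j)$ nor $\bar\sigma(F_m)$, a further sparse-based argument forces $u=\bar\sigma(F_l)$ for the third face.

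The main obstacle is the confinement argument in the one-face case. A naive cyclic subword drawn from the $\bar F_l$-portion of the fold need not contain position $0$ on the $F_m$-side, so one must carefully reroot the sparse argument via a $\sigma$-vertex that inductive claim (2) places within $\bar F_j\cap\bar F_l$; sparse~1 (or sparse~2 when the overlap arithmetic forces it) then yields the contradiction. The analogous adaptation is also needed to complete claims (3) and (4), by ruling out excess overlaps between the fold path and further faces of $S_{m-1}$.
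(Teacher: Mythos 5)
Your overall architecture is the same as the paper's: induction on $m$, reduction via inductive claims (3)--(4) to the cases where the attaching vertex $v$ lies in one or two faces of $S_{m-1}$, the three-face lemma disposing of the two-face case, and a confinement argument for the one-face case followed by the two-face lemma. The two-face case and the reduction to at most two incident faces are exactly the paper's Cases 1 and 2.

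The one place you diverge is the confinement step in the one-face case, where the paper simply applies the two-face lemma (to get $v'\neq\bar\sigma(F_j)$) and then the three-face lemma at each vertex $v'$ of the glue path, whereas you re-derive the sparseness contradiction inline. Your inline version has a genuine soft spot: you take pair~2 from the intersection $\bar F_j\cap\bar F_l$, but when $F_l$ was attached to $\bar F_j$ at $v'=\bar\sigma(F_l)$ with no edges glued, that intersection is a single vertex and yields no cyclic subword at all, so your pair~2 does not exist; yet this is precisely a configuration in which one must still rule out the next edge of $F_m$ folding onto an edge of $\bar F_l$ at $v'$. The contradiction there must instead come from taking the second pair from the putative $F_m$--$F_l$ identification itself, which places $i(F_m,v')$ in $\zone(q_1')\cap\zone(q_2)$ and violates ({\bf sparse 1}) --- this is exactly Case~2 of the paper's three-face lemma, which is why the paper invokes that lemma rather than reproving it. A smaller imprecision: in the nondegenerate configuration with $\bar\sigma(F_l)\in\bar F_j$, the contradiction at $v'$ comes from the offset equation in ({\bf sparse 2}) (since $v\neq\bar\sigma(F_l)$), not from ({\bf sparse 1}) as you first assert, though your closing parenthetical acknowledges this. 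With the missing sub-case restored --- most economically by citing the three-face lemma as the paper does --- your argument goes through.
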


\begin{proof}

The proof proceeds by induction on $m$. The result is clear if $m$
is 0 or $1$. Suppose that the result is true for approximation
sequences of length $m-1$. Let $v$ be the vertex of $S_{m-1}$ at
which $F_m$ is attached to $S_{m-1}$.  From part (4) of the
induction assumption, at most three faces contain
the point $v$.

\underline{Case 1}. Suppose that $v$ is on the boundary of three
faces in $S_{m-1}$. Then by part (3) of the induction assumption,
one of these faces $\bar F$ satisfies $v = \bar \sigma(F)$. But
then the algorithm for constructing the \sch\ approximation
sequence would not attach $F_m$ at $v$ also. Hence Case 1 cannot
occur.

\underline{Case 2}. Suppose that $v$ is on the boundary of exactly
two faces $\bar F_i$ and $\bar F_j$ in $S_{i-1}$. By part (2) of
the induction hypothesis, we may assume without loss of generality
that $\bar \sigma(F_i) \in \bar F_j$ and again by this induction
hypothesis there is a non-trivial path in $\bar F_i \cap \bar F_j$
from $v$ to $\bar \sigma(F_i)$. Then by the three-face lemma, no
edge of $F_m$ is glued onto any edge of $\bar F_i \cup \bar F_j$
at $v$, and hence no edge of $F_m$ is glued onto any edge of
$S_{m-1}$ at all. Hence properties (1)--(5) of the statement of
the theorem hold for $S_m$.

\underline{Case 3}. Suppose that $v$ is on the boundary of exactly
one face $\bar F_i$ of $S_{m-1}$. Consider the complex $\hat{S_m}$
obtained from $S_{m-1}$ and $F_m$ by just gluing edges of $F_m$
and $\bar F_i$ starting from $v$, and no additional edge foldings.
Then $\bar F_i \cap \hat F_m$ is a connected path. If there exists
a vertex ${v'}$ in $\bar F_i \cap \hat{F_m}$ with ${v'} \neq
v=\hat \si(F_m)$, the two-face lemma says that ${v'} \neq \bar
\sigma(F_i)$ also. In this case the three-face lemma then says
that any other face incident to ${v'}$ cannot contain an edge that
can be identified with an edge of either $\bar F_i$ or $\hat{F_m}$
in a further folding process. Thus in any case no further edges
can be glued, and $\hat{S_m} = S_m$. Hence properties (1)--(5) of
the statement of the theorem hold for $S_m$.
\end{proof}

Using part (5) of Theorem \ref{thm:sc1}, we may consider $S_0
\subset S_1 \subset S_2 \subset S_3 \subset \cdots$, and so $SC(1)
= \cup_{m=0}^{\infty}S_m$ for any \sch\ approximation sequence
constructed as above.  Hence the corollary below follows
immediately.

\begin{cor}\label{cor:sc1}
Properties (1)--(5) of Theorem \ref{thm:sc1} hold with $S_m$
replaced by $SC(1)$.
\end{cor}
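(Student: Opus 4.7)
The plan is to exhibit $SC(1)$ as a nested union of the $S_m$ and then transfer each conclusion of Theorem~\ref{thm:sc1} from finite stages to the limit. By Theorem~\ref{thm:sc1}(5), the natural map $S_{m-1}\to S_m$ is an embedding for every $m$, so I would identify $S_{m-1}$ with its image and view the \sch\ approximation sequence as a chain of subcomplexes $S_0\subset S_1\subset S_2\subset\cdots$. The excerpt has already recorded that the direct limit of this sequence is $SC(1)$, so under these identifications $SC(1)=\bigcup_{m\geq 0}S_m$ and each inclusion $S_m\hookrightarrow SC(1)$ is itself an embedding of CW-complexes.

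Next I would verify the five conclusions individually by a finite-support argument. For (1), a face $\bar F_i$ of $SC(1)$ first appears at stage $i$; by Theorem~\ref{thm:sc1}(1) the map $F_i\to\bar F_i\subseteq S_i$ is an embedding, and since $S_i\hookrightarrow SC(1)$ is an embedding, the composite $F_i\to\bar F_i\subseteq SC(1)$ remains injective. For (2)--(4), given any collection of distinct faces $\bar F_{i_1},\ldots,\bar F_{i_k}$ of $SC(1)$ with $k\leq 4$, choose $m\geq\max_s i_s$ so that all of these faces lie in $S_m$. Because $S_m\hookrightarrow SC(1)$ is an embedding of CW-complexes, the intersection of these faces computed inside $SC(1)$ coincides with the intersection computed inside $S_m$, together with the identification of the distinguished vertices $\bar\sigma(F_{i_s})$. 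Hence the corresponding conclusion of Theorem~\ref{thm:sc1}(2)--(4) holds verbatim in $SC(1)$. Property (5) is automatic under the identifications above.

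The only point that needs care is that no extra identifications appear in the limit: cell-by-cell intersections, connectivity of intersection paths, and the placement of distinguished vertices $\bar\sigma(F_i)$ are all preserved under the embeddings $S_m\hookrightarrow S_{m+1}\hookrightarrow\cdots\hookrightarrow SC(1)$, so any finite configuration of faces in $SC(1)$ behaves exactly as in the first $S_m$ containing all of them. There is no genuinely hard step; the entire combinatorial content has been isolated in Theorem~\ref{thm:sc1}, and Corollary~\ref{cor:sc1} is a purely formal colimit argument, which is why the authors can assert that it ``follows immediately.''
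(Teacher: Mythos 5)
Your proposal is correct and matches the paper's argument: the authors likewise use Theorem~\ref{thm:sc1}(5) to view the approximation sequence as a nested chain $S_0\subset S_1\subset\cdots$ with union $SC(1)$, and then conclude that properties (1)--(5) transfer to the limit. You have simply spelled out the finite-support step that the paper compresses into ``follows immediately.''
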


For every \sch\ approximation sequence, there is a vertex $v_0$
which is the unique vertex incident to only one face in the
direct limit, and so there is a unique vertex in $SC(1)$, which
we will also call $v_0$, that is incident to only one face,
which we will refer to throughout as the face $F_1$.  For any
face $A$ of $SC(1)$, the sparse property of $w$ implies
that there is only one vertex in $\partial A$ that can
be the start vertex $\bs(A)$, and only one possible orientation
starting from this vertex in which the word $w$ labels the
boundary path.

For distinct faces $A$ and $B$ of $SC(1)$, we define $A<B$ if the face
$A$ must be attached before the face $B$ in every \sch\ approximation
sequence.   The corresponding partial ordering $\le$
is the {\bf face ordering} on the faces of $SC(1)$.  This partial
ordering is well-founded, and the face $F_1$ is a minimal element.

\begin{cor} [\bf {Order Corollary}]\label{ordercor}
Let $v$ be a vertex of $SC(1)$, and let $B$ be the face
with $v=\bar \sigma(B)$.
\begin{enumerate}
\item If $v$ is incident to exactly one other face $A$, then $A<B$.
\item If $v$ is incident to two other faces $A$ and $C$ with
$\bar \sigma(C) \in A$, then $A<B$ and $A<C$.
\item If $v$ is incident to a face $A$ and $A \cap B$ contains
at least one edge, then $A<B$.
\end{enumerate}
\end{cor}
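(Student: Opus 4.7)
The plan is to prove each part by contradiction, using the rules for constructing a Sch\"utzenberger approximation sequence together with Theorem \ref{thm:sc1} and the injectivity of the map $A \mapsto \bar\sigma(A)$ on faces of $SC(1)$. Injectivity holds because the construction only attaches a face at a vertex not already serving as another face's start vertex, so no two faces of $SC(1)$ can share a start vertex. The second key mechanical fact is that in any approximation sequence, a face $F_i$ can be attached at $v_{i-1}$ only if $v_{i-1} \in S_{i-1}$; aside from the case $v_{i-1}=v_0=\bar\sigma(F_1)$, this means that some face incident to $v_{i-1}$ in $SC(1)$ must already appear among $F_1,\ldots,F_{i-1}$.

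For part (1), I suppose that some approximation sequence attaches $B$ before $A$. Since the only faces at $v$ in $SC(1)$ are $A$ and $B$ and neither has been attached when $B$ is placed, $v$ cannot lie in $S_{i-1}$; and $v=v_0$ would force $B=F_1$, contradicting that $v$ is also incident to $A$. Hence no such sequence exists and $A<B$.

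For part (2), I first apply Theorem \ref{thm:sc1}(3) to $A\cap B\cap C$: since $v=\bar\sigma(B)$ lies in this triple intersection, injectivity pins the distinguished face to be $B$, yielding $A\cap B=B\cap C=\{v\}$. Next I examine $\bar\sigma(C)\in A$: injectivity forces $\bar\sigma(C)\neq v$, so $A\cap C$ contains the distinct vertices $v$ and $\bar\sigma(C)$. A hypothetical third face $D$ at $\bar\sigma(C)$ would, via Theorem \ref{thm:sc1}(3) applied to $A\cap C\cap D$ with distinguished face forced to be $C$, collapse $A\cap C$ to $\{\bar\sigma(C)\}$, a contradiction. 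Hence $\bar\sigma(C)$ is incident only to $A$ and $C$ in $SC(1)$, and part (1) applied there gives $A<C$. For $A<B$: if $B$ were attached before $A$, then $v\in S_{i-1}$ would force the third face at $v$, namely $C$, to have been attached first; combined with $A<C$ this produces $A<C<B$, contradicting $B<A$.

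For part (3), I assume $B=F_i$ is attached before $A$. Then $v\in S_{i-1}$ forces a third face $G$ at $v$ to have already been attached, so $v$ is incident to three distinct faces $A$, $B$, $G$ in $SC(1)$. Applying Theorem \ref{thm:sc1}(3) to $A\cap B\cap G$, with the distinguished face again pinned to $B$ by injectivity, collapses $A\cap B$ to $\{v\}$; but the hypothesis that $A\cap B$ contains an edge means $A\cap B$ has at least two vertices, the desired contradiction. The chief technical point throughout is recognizing that the distinguished face produced by Theorem \ref{thm:sc1}(3) is always forced by injectivity of $\bar\sigma$ to be whichever face's start vertex lies in the triple intersection; once this is in hand, the geometry of Theorem \ref{thm:sc1} does the rest.
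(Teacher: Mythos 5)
Your proof is correct and follows essentially the same route as the paper: both rest on the observation that a face can only be attached at a vertex already present in the approximation complex (hence some incident face must precede it), combined with Theorem \ref{thm:sc1} parts (2)--(4) and the injectivity of $\bar\sigma$ to pin down the distinguished face in triple intersections. The only cosmetic differences are that you make the injectivity of $\bar\sigma$ explicit and phrase part (3) as a direct contradiction rather than a reduction to part (1).
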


\begin{proof}
Let $S_0,S_1,S_2, \ldots$ be any \sch\ approximation sequence for
$SC(1)$ corresponding to a sparse word $w$, with face $F_i$ attached
to $S_{i-1}$  in the construction of $S_i$, as above.
In the case that $v$ is incident only to faces $A=\bar F_j$ and $B=\bar F_k$,
the vertex $v$ must exist in a complex $S_i$ before $B$ can
be attached, and so we must have $j<k$.

In the case that $v$ is also incident to a third face $C=\bar F_l$ with
$\bar \sigma(C) \in A$,
then Theorem \ref{thm:sc1} says that $A \cap C$ contains a connected
non-empty edge path from $\bar \sigma(C)$ to $v$, and so
at the vertex $\bar \sigma(C)$, an edge of $C$ is glued to an edge of $A$.
Again applying Theorem \ref{thm:sc1}, no face other than $A$ and
$C$ can be incident to $\bar \sigma(C)$ in any of the $S_i$.
Then as in the paragraph above, we have $j<l$.  Now the face
$F_k$ can be attached at $v$ only after $v$ has been built in the
sequence, and hence only after at least one of $F_j$, $F_l$
has been attached.  Therefore $j<k$ also.

Finally, if $v \in A$ and $A \cap B$ contains at least one edge,
then Theorem \ref{thm:sc1} says that no other face can be
incident to $v$, and so the first paragraph of this proof applies.
\end{proof}

\section{The dual graph and the word problem  }\label{sec:wp}

In this section we define a notion of a dual graph of  the \sch\
complex $SC(1)$ for  an inverse monoid $M=\Inv{X}{w=1}$
corresponding to a sparse word $w$. We show that this dual graph
is a tree and we make use of this to provide a solution to the
word problem for $M$.

\begin{defn}\label{dualgrdefn}
Let $w$ be a sparse word. The {\em dual graph} of $SC(1)$ for
$M=\Inv{X}{w=1}$ is the directed graph $\mathcal{D}$ with
\begin{itemize}
\item vertex set $V(\mathcal{D})$ given by the set of faces of $SC(1)$, and
\item set $E(\mathcal{D})$ of directed edges $(A,B)$ (oriented from
$A$ to $B$)
for $A,B \in V(\mathcal{D})$ satisfying
$A<B$ in the face ordering and $A \cap B \neq \emptyset$ in $SC(1)$.
\end{itemize}
\end{defn}

As a consequence of Corollaries \ref{cor:sc1} and \ref{ordercor}, the
definition of $E(\mathcal{D})$ can also be phrased purely in
terms of the combinatorial properties of  $SC(1)$, namely
$(A,B)$ is a directed edge in $\mathcal{D}$ if and only if
$A\neq B$, $\bar \si(B) \in A$,
and whenever $C \in V(\mathcal{D})$
with $\bar \si(B) \in C$ then $\bar \sigma(C)\in A$.

\begin{prop}\label{dualgraphisatree}
Let $w$ be a sparse word and $M=\Inv{X}{w=1}$. Then the dual graph
$\mathcal{D}$ of $SC(1)$ is a directed, rooted, infinite tree
(with root $F_1$) in which each vertex has at most $l(w)-1$
children.
\end{prop}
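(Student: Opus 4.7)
The plan is to show that every face $B\in V(\mathcal{D})\setminus\{F_1\}$ has a unique incoming edge in $\mathcal{D}$, so that the parent function is well-defined; well-foundedness of the face ordering, with $F_1$ its unique minimum, will then force $\mathcal{D}$ to be a tree rooted at $F_1$. I will work throughout with the combinatorial characterization of $E(\mathcal{D})$ stated just before the proposition: $(A,B)\in E(\mathcal{D})$ iff $A\neq B$, $\bar\sigma(B)\in A$, and $\bar\sigma(C)\in A$ for every face $C$ with $\bar\sigma(B)\in C$.

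Fix $B\neq F_1$ and set $v=\bar\sigma(B)$. Since start vertices are face-unique, $v\neq v_0$, and $v_0$ is the only vertex of $SC(1)$ incident to a single face, so $v$ meets at least one face other than $B$; by Corollary \ref{cor:sc1}(4) it meets at most two. The Order Corollary then supplies a candidate parent $A$: the unique other face at $v$ (Case~1), or, when there are two other faces $A,C$ at $v$ with $\bar\sigma(C)\in A$ (Case~2, the labelling guaranteed by Corollary \ref{cor:sc1}(3)), the ``outer'' face $A$. The edge criterion is immediate for this $A$ in both cases.

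The main step is uniqueness. Suppose $E\neq A$ also satisfies the edge criterion for $(E,B)$. Then $v\in E$, forcing $E=A$ in Case~1, and leaving only $E=C$ in Case~2. In the latter, the edge criterion applied with the further face $D:=A$ (also containing $v$) demands $\bar\sigma(A)\in C$; but Corollary \ref{cor:sc1}(2) applied to $A$ and $C$, combined with $\bar\sigma(C)\in A$, rules this out. So $E=A$ in either case, and the parent is unique. The real obstacle of the proposition --- already largely met in the paragraph preceding its statement --- is establishing the combinatorial characterization of $E(\mathcal{D})$; granted that, the uniqueness argument is essentially this short check.

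With unique parents in hand, iterating the parent map strictly decreases the well-founded ordering $<$; since $F_1$ is its unique minimum, every face reaches $F_1$ in finitely many steps, so $\mathcal{D}$ is connected and acyclic, i.e.\ a tree rooted at $F_1$. For the degree bound, any child $C$ of $B$ has $\bar\sigma(C)\in\partial B\setminus\{\bar\sigma(B)\}$, and distinct children occupy distinct vertices by face-uniqueness of start vertices, so the set $\partial B\setminus\{\bar\sigma(B)\}$ --- of cardinality $l(w)-1$ --- bounds the number of children. Finally, $\mathcal{D}$ is infinite because the \sch\ approximation sequence cannot terminate: as noted in the discussion preceding Theorem \ref{thm:sc1}, termination would make $S\Gamma(1)$ finite and force the first letter of $w$ to be torsion in $\Gp{X}{w=1}$, contradicting torsion-freeness of this one-relator group (which holds by primitivity of $w$, Lemma \ref{primitivelemma}).
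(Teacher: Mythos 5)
Your proof is correct and uses the same ingredients as the paper's: the Order Corollary and Corollary \ref{cor:sc1} to produce a parent for each face, Corollary \ref{cor:sc1}(2) to rule out two in-edges at a face (the paper phrases this as ``no two circuit edges share a target''), well-foundedness of the face ordering to exclude directed cycles and give connectedness (the paper runs this as a Noetherian induction building a path from $F_1$), and the identical counting and primitivity arguments for the degree bound and infiniteness. The reorganization around a unique parent function is only cosmetic, so this is essentially the paper's proof.
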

\begin{proof}
Recall that the face $F_1$ is the only face of $SC(1)$
containing the unique vertex $v_0$ of $SC(1)$ incident to only
one face.  Let $A \neq F_1$ be any other face in $SC(1)$,
and assume by Noetherian induction that for all faces $B<A$
with respect to the well-founded face ordering, there is a directed
edge path in $\mathcal{D}$ from $F_1$ to $B$.
From Corollary \ref{cor:sc1}, there are either 2 or 3 faces
incident to the vertex $\bar \sigma(A)$ in $SC(1)$, including
$A$.

If there is only one other face $B$ incident to $\bar \sigma(A)$,
then the Order Corollary \ref{ordercor} implies that $B<A$.  Since
$\bar \sigma(B) \in A \cap B \neq \emptyset$, then $(B,A) \in E(\mathcal{D})$.
The concatenation of the path from $F_1$ to $B$ from the
induction assumption with this edge $(B,A)$ then gives
a directed edge path in $\mathcal{D}$ from $F_1$ to $A$.

On the other hand, if there are two other faces $B$ and $C$
incident to $\bar \sigma(A)$, then Corollary \ref{cor:sc1}
says that one of these faces contains the $\bar \sigma$ vertex
of the other; without loss of generality, suppose that
$\bar \sigma(C) \in B$.  Then the Order Corollary \ref{ordercor}
again implies that $B<A$, and as in the previous paragraph
we obtain a directed path in $\mathcal{D}$ from $F_1$ to $A$.
Hence $\mathcal{D}$ is connected.

Suppose that $\mathcal{D}$ is not a tree.  Then there
is an undirected circuit in this graph.

Suppose that two edges of this
circuit have a common target; that is, suppose that there are
edges $(A,C), (B,C) \in E(\mathcal{D})$
with $A \neq B$.  Using the combinatorial description of
$E(\mathcal{D})$ above, then
$\bar \si(C) \in A \cap B$.  From Corollary
\ref{cor:sc1} part (2),
$A \cap B$ is a path containing one of $\bar \si(A)$
or $\bar \si(B)$ but not both.
This contradicts the existence of one of the edges $(A,C), (B,C)$,
and so the circuit must also be a directed circuit.

The consecutive vertices $A_1,A_2,...,A_k$ following
the directed edges in this circuit must
then satisfy $A_1<A_2< \cdots A_k<A_1$ in the face ordering,
which is again a contradiction.  Hence $\mathcal{D}$ is a
directed tree with root $F_1$.

Since each face $A$ of $SC(1)$ has $l(w)-1$ vertices other than its
vertex $\bar\si(A)$, there are at most $l(w)-1$ directed
edges in $\mathcal{D}$ with source
vertex $A$.  In
addition, as remarked in Section \ref{sec:sc1}, the fact that $w$ is primitive
guarantees $SC(1)$ is infinite. Therefore the tree $\mathcal{D}$ must be
infinite.
\end{proof}

To simplify notation later, it will be helpful to
consider a slight modification of $\mathcal{D}$.
The {\bf augmented dual graph} ${\bf \mathcal{D}'}$ is
obtained from $\mathcal{D}$ by
adding an additional vertex $v_0$ to $\mathcal{D}$ and  an
additional directed edge from $v_0$ to $F_1$.
Then $\mathcal{D}'$ is a directed rooted tree with root $v_0$.
Using standard language for rooted trees, if $(A,B)$ is a
directed edge in $\mathcal{D'}$, we call $A$ the parent of
$B$, and $B$ a child of $A$.

Define a map $\Omega:V(SC(1)) \rightarrow V(\mathcal{D'})$
as follows.
For each vertex $v \neq v_0$ in $SC(1)$, let
$\Omega(v)$ be the unique face of $SC(1)$
that is closest to $v_0$ in
$\mathcal{D'}$ from among the faces that are incident to $v$, and
let $\Omega(v_0) := v_0$.

For any face $A$ of $SC(1)$, then $\Omega(\bar\si(A))$ is the
parent of $A$, and so $\Omega(\bar\si(A))<A$;
i.e., $\Omega(\bar\si(A))$ must be attached before
$A$ in any \sch\ approximation.
By Corollaries \ref{cor:sc1} and \ref{ordercor},
in the folding process edges
of $A$ can be glued to edges of $\Omega(\bar\si(A))$
but not to edges of any other face, and
the glued edges are a connected path.
Recall that the boundary $\partial A$ of the polygon $A$
is labeled by the word $w$, when read starting at the
vertex $\si(A)$ in the clockwise direction.  The connected
set $\gamma(A):=A \cap \Omega(\bar\si(A))$, then, can be regarded
as the image of a (``gluing'') path (which we will also
call $\gamma(A)$) going clockwise around $\partial A$
from the (``reverse'') vertex $\rho(A)$
to the (``forward'')
vertex $\phi(A)$.
Note that if no edges are glued when $A$ is attached to
its parent $\Omega(\bar\si(A))$, then $\rho(A)=\bar\si(A)=\phi(A)$
and $\gamma(A)$ is this point.

\begin{lemma}\label{halflemma}

Let $A$ be a face of the complex $SC(1)$ for a sparse word $w$.
\begin{enumerate}
\item The lengths $l(w)$ and $l(\gamma(A))$ satisfy
$l(\gamma(A)) \le \frac{1}{2}l(w)-1$.
\item If $v$ is a vertex in $\gamma(A)$, then
$\own(v)=\own(\bs(A))$.
\item If $v$ is a vertex in $\partial A \setminus \gamma(A)$,
then $\own(v)=A$.
\end{enumerate}
\end{lemma}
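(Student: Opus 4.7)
The plan is to prove parts (1), (2), and (3) separately: part (1) via a cardinality argument applied to two cyclic subwords of $w$ arising from $\gamma(A)$, and parts (2) and (3) via case analyses on the faces incident to $v$.

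For part (1), let $P := \own(\bs(A))$, so $P$ is the parent of $A$ in $\md$ and $\gamma(A) = A \cap P$. The bound is trivial when $l(\gamma(A)) = 0$, so I assume $l(\gamma(A)) \ge 1$. Read clockwise along $\partial A$, the path $\gamma(A)$ determines a cyclic subword $q_1' = w(i_1',j_1',1)$ of $w$; read along $\partial P$ (in the direction inherited from the gluing), the same path determines a cyclic subword $q_1 = w(i_1,j_1,\epsilon_1)$, and $q_1 = q_1'$ in $\WORDS$ because they label the same path. Since $\bs(A)$ has index $0$ in $\partial A$ and lies on $\gamma(A)$, $0 \in \zone(q_1')$; since $\bs(P) \notin A$ by Corollary \ref{cor:sc1}(2), $0 \notin \zone(q_1)$, so $\zone(q_1) \ne \zone(q_1')$. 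Applying Definition \ref{sparsedefn} with the single pair $(q_1,q_1')$ playing both roles $k=1$ and $k=2$, condition \textbf{(sparse 1)} yields $\zone(q_1) \cap \zone(q_1') = \emptyset$. Each zone has $l(\gamma(A)) + 1$ elements, and both lie in $\mathbb{Z}/n\mathbb{Z}$ with $n = l(w)$, so $2(l(\gamma(A)) + 1) \le l(w)$, giving the claimed bound.

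For parts (2) and (3), I will case-split on the number of faces incident to $v$, which is at most three by Corollary \ref{cor:sc1}(4), and use the Order Corollary \ref{ordercor} to determine which face is closest to $v_0$ in $\md$. In part (2), $v \in A \cap P$; if these are the only incident faces then $\own(v) = P$ since $P$ is the parent of $A$, and if a third face $B$ is present then $v = \bs(B)$ by Corollary \ref{cor:sc1}(3) (because $v \ne \bs(A)$ in the nontrivial case and $v \ne \bs(P)$ as $\bs(P) \notin A$); Order Corollary (2) applied at $\bs(B)$ with the other two faces $A, P$ (using $\bs(A) \in P$) gives $P < B$, so $P$ is also the parent of $B$, confirming $\own(v) = P$. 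In part (3), $v \notin P$ excludes $P$ from the faces at $v$. If $A$ is the unique incident face, then $\own(v) = A$. If a second face $B$ is present, then Corollary \ref{cor:sc1}(2) forces $\bs(B) \in A$: the alternative $\bs(A) \in B$ would place $\bs(A)$ in three faces $A, P, B$, so by Corollary \ref{cor:sc1}(3) both $\gamma(A)$ and $A \cap B$ would collapse to $\{\bs(A)\}$, contradicting $v \in A \cap B$ with $v \ne \bs(A)$. Then the Order Corollary yields $A < B$ and $A$ is the parent of $B$; a possible third face $C$ at $v$ is handled analogously, using Order Corollary (2) at $\bs(B)$ and excluding $\bs(A) \in C$ by the same argument. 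In every case, $A$ is the incident face closest to $v_0$, so $\own(v) = A$.

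The main obstacle I anticipate is in part (1): condition \textbf{(sparse 1)} requires $0$ to lie in the zone of the primed subword of each of the two pairs, but the natural symmetric swap $(q_2,q_2') := (q_1',q_1)$ fails because $0 \notin \zone(q_1)$. The resolution is to notice that Definition \ref{sparsedefn} permits the two pairs to coincide, so the single pair $(q_1,q_1')$ can play both roles. In parts (2) and (3) the subtle point is excluding scenarios in which a third face passes through $\bs(A)$: such configurations collapse $\gamma(A)$ to the single vertex $\bs(A)$ via Corollary \ref{cor:sc1}(3), which either lands in the degenerate subcase or contradicts the hypothesis that $v$ lies in (respectively, outside of) $\gamma(A)$.
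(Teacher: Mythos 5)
Your proof is correct and takes essentially the same route as the paper's: part (1) applies condition (\textbf{sparse 1}) with the single pair $(q,q')$ serving as both pairs in Definition \ref{sparsedefn} to force the two zones to be disjoint and hence $l(w)\ge 2l(\gamma(A))+2$, while parts (2) and (3) run the same case analysis on the faces incident to $v$ via Corollary \ref{cor:sc1} and the Order Corollary \ref{ordercor}. The only differences are cosmetic — you verify the hypotheses $0\in\zone(q')$ and $\zone(q)\neq\zone(q')$ explicitly, and in part (3) you exclude $\bar\sigma(A)\in B$ through the triple intersection at $\bar\sigma(A)$ rather than the paper's direct observation that $v\notin\Omega(\bar\sigma(A))$.
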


\begin{proof}
The path $\gamma(A)$ determines a cyclic subword $q'$ of $w$
when viewed as a
path in $\partial{A}$, and determines a cyclic subword $q$ when viewed as a
path in the parent $\partial{\Omega(\bar\si(A))}$ of $A$.
Since $w$ is sparse we must have $\zone(q')
\cap \zone(q) = \emptyset$, (take $(q_1,q_1') = (q_2,q_2') =
(q,q')$ in Definition \ref{sparsedefn}), and so
there must also be at least one edge between the endpoints
of these cyclic subwords on both sides.
Then $l(w) \ge 2 l(\gamma(A)) +2$.

If $v$ is a vertex in  $\gamma(A)$, then by
definition of the set $\gamma$, the point $v$ is
also in the parent $\own(\bs(A))$ of $A$.
If there is a third face $C$ incident
to $v$ in $SC(1)$, then by the order corollary and the definition
of $\mathcal{D'}$, the face $\own(\bs(A))$ is also the parent of $C$.

For a vertex $v$ in $\partial A \setminus \gamma(A)$,
then $v=\bs(B)$ for another face $B$ of $SC(1)$.  If there is no
other face incident to $v$, the order corollary then says $A<B$.
If $C$ is a third face incident to $v$, then Corollary \ref{cor:sc1}
says that $A$ and $C$ must share at least one edge in common,
and either $\bs(A)$ is in $C$, or $\bs(C)$ is in $A$.
The order corollary then says that the face among $A$ and $C$
that contains the start vertex $\bs$ of the other is the parent
of the pair.  However,
since $v \notin \own(\bs(A))$, we must have
$C \neq \own(\bs(A))$, and hence
$\bs(C) \in A$
and $A$ is the parent of both $B$ and $C$.
\end{proof}

Let the 1-skeleton $S\Gamma(1)$ of the 2-complex $SC(1)$
have the path metric $\ds$, and let
the augmented dual graph have path metric $\dd$.  The following
theorem shows that geodesics in these metric spaces are closely related.

\begin{thm}[\bf {Geodesic Theorem}]\label{geodesicprop}
Let $p$ be any geodesic edge path in $S\Gamma(1)$ from $v_0$ to a
vertex $v$.  Let $v_0,v_1,...,v_k=v$ be the successive vertices in
the path $p$.  Then for all $i$, either
$\Omega(v_i)=\own(v_{i+1})$ or $\Omega(v_i)$ is the parent of
$\own(v_{i+1})$ in $\mathcal{D'}$, and the edge from $v_i$ to
$v_{i+1}$ is contained in $\own(v_{i+1})$. Moreover, whenever
$\Omega(v_i) < \own(v_{i+1})$, then $v_i \in
\{\rho(\own(v_{i+1})), \phi(\own(v_{i+1}))\}$.
\end{thm}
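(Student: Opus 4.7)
The plan is to prove the three assertions simultaneously by induction on $i$, with Lemma~\ref{halflemma} doing most of the work once claim (a) is in hand. For the base case $i=0$: $v_0$ is incident only to $F_1$, so $(v_0,v_1)\in F_1$; every other face of $SC(1)$ is a descendant of $F_1$ in $\mathcal{D}'$, so $\Omega(v_1)=F_1$ and $\Omega(v_0)=v_0$ is its parent; the final assertion holds trivially because $\gamma(F_1)$ is a single point, making $\rho(F_1)=\phi(F_1)=v_0$.

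For the inductive step, assume (a)--(c) for all smaller indices, set $B=\Omega(v_{i+1})$ and $e=(v_i,v_{i+1})$, and pick any face $A$ of $SC(1)$ containing $e$. By Lemma~\ref{halflemma}(2)--(3) applied at $v_{i+1}$ on the face $A$, either $v_{i+1}\in\partial A\setminus\gamma(A)$ and $\Omega(v_{i+1})=A$, giving $A=B$ and (a) outright; or $v_{i+1}\in\gamma(A)$ and $B$ is the parent of $A$ in $\mathcal{D}'$. In this latter case I split on $v_i$. If $v_i\in\gamma(A)$, then $v_i$ and $v_{i+1}$ lie on the contiguous boundary arc $\gamma(A)$ and are adjacent on $\partial A$; since $\gamma(A)$ is strictly shorter than $\partial A$ by Lemma~\ref{halflemma}(1), they must be consecutive along $\gamma(A)$, so $e\in\gamma(A)\subseteq B$. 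If instead $v_i\in\partial A\setminus\gamma(A)$, then $\Omega(v_i)=A$ by Lemma~\ref{halflemma}(3); tracing backward along $p$ using the inductive hypothesis (b), which says each $\Omega(v_{k-1})$ is $\Omega(v_k)$ or its parent, yields a largest $j<i$ with $\Omega(v_j)=B$ and $\Omega(v_{j+1})=\dots=\Omega(v_i)=A$, and the inductive hypothesis (c) gives $v_j\in\{\rho(A),\phi(A)\}$. The intermediate vertices lie in $\partial A\setminus\gamma(A)$ by Lemma~\ref{halflemma}(3), while $v_{i+1}\in\gamma(A)$ is adjacent to $v_i\in\partial A\setminus\gamma(A)$, so $v_{i+1}$ must be the other endpoint of $\gamma(A)$; the sub-path $v_j,\dots,v_{i+1}$ of length $i+1-j$ is then the back arc of $\partial A$ of length $l(w)-l(\gamma(A))$, which by Lemma~\ref{halflemma}(1) strictly exceeds $l(\gamma(A))$. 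Replacing this sub-path by a traversal of $\gamma(A)$ inside $B$ produces a strictly shorter $v_0$-to-$v_{i+1}$ edge path in $S\Gamma(1)$, contradicting the geodesicity of $p$. This rules out the second sub-case and establishes (a).

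Once (a) holds, $v_i\in B$, and Lemma~\ref{halflemma}(2)--(3) applied to $B$ gives $\Omega(v_i)\in\{B,P\}$ where $P$ is the parent of $B$ in $\mathcal{D}'$, proving (b). For (c), suppose $\Omega(v_i)=P<B$, so $v_i\in\gamma(B)$; if $v_i$ were an interior vertex of $\gamma(B)$, both boundary edges of $B$ at $v_i$ would lie in $\gamma(B)\subseteq P$, in particular $e\in P$ and hence $v_{i+1}\in P$, contradicting $B=\Omega(v_{i+1})$ since $P<B$. The main obstacle is the shortcut construction in the second sub-case of (a); the whole argument rests on the inequality $l(w)-l(\gamma(A))>l(\gamma(A))$ supplied by Lemma~\ref{halflemma}(1), which makes detours through the gluing arc strictly cheaper than the complementary back arc.
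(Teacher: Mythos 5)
Your proof is correct and follows essentially the same route as the paper's: induction on the path length, with Lemma~\ref{halflemma} locating each vertex relative to the gluing arc $\gamma(A)$, and the inequality $l(\gamma(A))\le \frac{1}{2}l(w)-1$ supplying the shortcut through $\gamma(A)$ that contradicts geodesicity whenever the path tries to traverse the complementary arc of $\partial A$. The only difference is organizational---you case on the position of $v_{i+1}$ within an arbitrary face containing the edge, whereas the paper cases on which of the faces incident to $v_i$ contains the edge---but the key lemmas and the contradiction are the same.
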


\begin{proof}
We prove this by induction on the length $k$ of the edge
path $p$.  If $k=0$, then $p$ is the constant path at $v_0$
in $SC(1)$, and there is no other vertex.  If $k=1$, then
$p$ follows a single edge from $v_0$ to $v_1 \neq v_0$.
Then $\own(v_0)=v_0$ is the parent of $\own(v_1)=F_1$ in $\mathcal{D'}$.

Suppose that $k \ge 2$.  The prefix $\hat p$ of the path $p$ with
vertices $v_0,...,v_{k-1}$ is also a geodesic path in
$S\Gamma(1)$, and so by induction the conditions on the pair
$\own(v_i),\own(v_{i+1})$ in the theorem hold for all $0 \le i \le
k-2$. The vertex $v_{k-1} \neq v_0$, so Corollary \ref{cor:sc1}
says that there are at least two faces $A:=\own(v_{k-1})$ and $B$
with $\bs(B)=v_{k-1}$, and possibly a third face $C$, incident to
the vertex $v$ in $SC(1)$. By definition of $\own$ and the Order
Corollary, we have $A<B$ and $A<C$.  The edge $e$ from $v_{k-1}$
to $v_k$ must be contained in at least one of these faces.

\underline{Case 1.}  Suppose that $e$ is contained in $A$.
If $v_k$ is in the path $\gamma(A)$, then
Lemma \ref{halflemma} implies that $\own(v_k) = \own(\bs(A))$,
but since $\own(v_{k-1})=A$, the same lemma implies that
$v_{k-1}$ is not in $\gamma(A)$.  Then $v_k$ must be
one of the endpoints $\rho(A)$, $\phi(A)$ of $\gamma(A)$.
By induction, the prefix $\hat p$ of $p$ traversed one of
these endpoints, and since $p$ is a geodesic, $\hat p$ must
have traversed the endpoint $v'$ of $\gamma(A)$ that is not $v_k$.
However, this implies that a suffix of $p$ is
a geodesic in $\partial A$ from $v'$ to $v_k$
that goes through the point $v_{k-1}$ not in $\gamma(A)$.
This contradicts Lemma \ref{halflemma}(1), and so $v_k$
must lie in $\partial A \setminus \gamma(A)$.
Lemma \ref{halflemma}(3) then implies that $\own(v_k)=A=\own(v_{k-1})$.

\underline{Case 2.} Suppose that $e$ is contained in a child
$E$ of the face $A$, but not in $A$.  That is, $E$ is one of
the faces $B$ or $C$.  In this case,
since $v_k$ is not contained in $A \cap E$, then Lemma \ref{halflemma}(3)
says that $\own(v_k)$ is $E$, and we have that
$\own(v_{k-1})=A$ is the parent of $\own(v_k)$.
Moreover, since $v_{k-1}$ is in $A \cap E$
but $v_k$ is not,
we have that $v_{k-1}$ is one of the endpoints
$\rho(E)$, $\phi(E)$ of $\gamma(E)$.
\end{proof}

We can now provide a solution to the word problem for $M$.

\medskip

\noindent {\bf Proof of Theorem \ref{wpsolved}}.
As noted in Section \ref{sec:intro}, it is
sufficient to prove that there is an
algorithm that takes a word $u\in\WORDS$ as input, and outputs
whether or not $u=1$ in $M$.  Given a sparse word $w$, the
following procedure is such an algorithm for $M=\Inv{X}{w=1}$.

Let $L:=l(u)$ be the length of the
word $u$.
The algorithm follows the construction of a
\sch\ approximation sequence as described at the beginning of
Section \ref{sec:sc1}, attaching a face at each step
to a vertex whose distance to $v_0$ in the approximation complex
is minimal from
among all of those vertices that are not yet the
start vertex ($\bs$) of a face.
Continue this process until
the next vertex at
which a face is to be attached has distance
$L \cdot l(w)+1$ from $v_0$; the
process stops at this time, with an approximation complex $S$.
Since each complex in this
sequence is locally finite, this process is finite.

From Theorem \ref{thm:sc1} we know that $S$ embeds in $SC(1)$.
From the Geodesic Theorem \ref{geodesicprop}, we have that
for each vertex $v$ in $S$,
any geodesic path $p$ in $S\Gamma(1)$ from $v_0$ to $v$ is contained
in the union of the
the faces labeling
vertices of the geodesic in $\md$ from $v_0$ to $\own(v)$.  By the
definition of the map $\own$,
these are the faces  that must be
constructed in the \sch\ approximation sequence before
the face $\own(v)$, together with the face
$\own(v)$ which
must be the first face containing $v$ constructed in the sequence.
Hence all of these faces are also in $S$, as is the path $p$.
Therefore the
path metric $d_S$ in the 1-skeleton of $S$ is the same as the metric
inherited from $S\Gamma(1)$.

We claim that every face $A$ of $SC(1)$ with $\dd(v_0,A) \le L$
lies in $S$.  Suppose not; that is, suppose that there is
a face $A$ with $\dd(v_0,A) \le L$ and $A$ not in $S$, and
choose $A$ to have minimal distance from $v_0$ in $\md$ among
all such faces.  Then the parent $\own(\bs(A))$ of $A$ satisfies
$\dd(v_0,\own(\bs(A)))=\dd(v_0,A)-1$, and so $\own(\bs(A))$ lies
in $S$.  But the previous paragraph and
Theorem \ref{geodesicprop} imply that
$d_S(v_0,\bs(A)) = \ds(v_0,\bs(A)) < L \cdot l(w)$,
and so $S$ has a vertex $\bs(A)$ within
$L \cdot l(w)$ of $v_0$ that is not the start vertex of
an attached face, giving the required contradiction.

Now let $v'$ be any vertex of $SC(1)$ with $\ds(v_0,v')
\leq L$.  It follows from Theorem \ref{geodesicprop}
that $\dd(v_0, \own(v')) \leq L$ also, and so by the
previous paragraph, $\own(v')$, and hence also $v'$,
is in the finite complex $S$.
Putting these results together, we have that $u$ labels a path from
$v_0$ to $v_0$ in $SC(1)$ if and only if $u$ labels a path
from $v_0$ to $v_0$ in $S$.
Since $u=1$ in $M$ if and only if $u$ labels a path in $SC(1)$ from
$v_0$ to $v_0$, the algorithm outputs $u=1$ if $u$ labels a path
from $v_0$ to $v_0$ in $S$, and outputs $u\neq 1$ in $M$
otherwise. \hfill $\Box$

\section{Languages of geodesics and words representing $1$}\label{sec:PDA}

Throughout this section, $w$ is a sparse word and
$M=\Inv{X}{w=1}$.

\begin{lemma}\label{furthest}
  Let $A$ be any face in $SC(1)$.  There is
a unique point $x_A$ in $\partial A \setminus \gamma(A)$
satisfying $\ds(v_0,x_A) \ge \ds(v_0,y)$
for all $y \in \partial A$.
\end{lemma}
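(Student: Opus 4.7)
The plan is to parametrize the open arc $\partial A \setminus \gamma(A)$ by arc length and write $\ds(v_0,\cdot)$ on this arc as an explicit minimum of two affine functions of the parameter. Set $n := l(w)$, $g := l(\gamma(A))$, $a := \ds(v_0, \rho(A))$, and $b := \ds(v_0, \phi(A))$. For $s \in (0, n-g)$, let $x(s)$ denote the point of $\partial A \setminus \gamma(A)$ at arc length $s$ from $\phi(A)$ (and hence $n-g-s$ from $\rho(A)$) along $\partial A$.

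The first step is to use the Geodesic Theorem \ref{geodesicprop} together with Lemma \ref{halflemma}(3) to show that every geodesic from $v_0$ to $x(s)$ in $S\Gamma(1)$ must enter $A$ at one of $\rho(A), \phi(A)$ and stay on the open arc thereafter: since $\own(x(s)) = A$ and the $\own$-values along any geodesic trace a single directed path in $\md$ (consecutive values are equal or stand in the parent-to-child relation), once the prefix first acquires $\own$-value $A$ it cannot leave, while the moreover clause of Theorem \ref{geodesicprop} forces the entry vertex to lie in $\{\rho(A), \phi(A)\}$. Hence $\ds(v_0, x(s)) = \min(b + s,\, a + n - g - s)$ (when $x(s)$ lies in the interior of an edge, the same formula is immediate from the distances to its two endpoints). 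Maximizing this min of a strictly increasing and a strictly decreasing affine function yields a strict unique maximum at $s^\ast := (a - b + n - g)/2$ with value $(a + b + n - g)/2$. Combining the triangle inequality $|a - b| \le g$ along $\gamma(A)$ with Lemma \ref{halflemma}(1), which says $n - g \ge g + 2$, forces $s^\ast \in [1,\, n - g - 1]$, so $x_A := x(s^\ast)$ lies inside the open arc.

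To finish, I compare $f(s^\ast)$ with values on $\gamma(A)$: for any $y \in \gamma(A)$ at arc length $t$ from $\rho(A)$ along $\gamma(A)$, the triangle inequality gives $\ds(v_0, y) \le \min(a + t,\, b + g - t) \le (a + b + g)/2$, which is strictly less than $(a + b + n - g)/2$ because $n - g > g$. Thus $x_A$ is the unique maximizer of $\ds(v_0, \cdot)$ on all of $\partial A$. The only delicate step is the first, where the Geodesic Theorem is needed to confine geodesic entry points to $\{\rho(A), \phi(A)\}$; the rest is routine arc-length arithmetic.
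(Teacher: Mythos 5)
Your proof is correct and follows essentially the same route as the paper's: both arguments use Lemma \ref{halflemma}(3) and the Geodesic Theorem to force every geodesic to a point of $\partial A \setminus \gamma(A)$ to enter through $\rho(A)$ or $\phi(A)$, then locate the unique maximizer by balancing the two affine distance functions along the arc (your $s^\ast$ is exactly the paper's point at distance $\frac{1}{2}(p+(b-a))$ from $\rho(A)$, with $p=n-g$), and finally bound distances on $\gamma(A)$ by $\frac{1}{2}(a+b+g)$ using Lemma \ref{halflemma}(1). The only cosmetic difference is that you phrase the maximization analytically as a min of two affine functions, where the paper observes that every other point of the arc lies on one of the two geodesic concatenations through $x_A$.
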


\begin{proof}
First consider points in the set
$T:=\partial A \setminus \gamma(A)$.
Lemma \ref{halflemma}(3) and the Geodesic Theorem \ref{geodesicprop}
imply that every
geodesic from $v_0$ to a point $y$ in $T$ must
traverse one of the points $\rho(A),\phi(A)$, and then
follow edges in the path along $T$ to $y$.
Let $a:=\ds(v_0,\rho(A))$, $b:=\ds(v_0,\phi(A))$, and $q:=l(\gamma(A))$,
and let $p$ be the length of the edge path in $T$ from
$\rho(A)$ to $\phi(A)$.
The triangle inequality together with Lemma \ref{halflemma}(1)
give $|b-a| \le q \le p-1$.
Let $x$ be the point in $T$ that is a distance
$\frac{1}{2}(p+(b-a))<p$ from the endpoint $\rho(A)$; then
$x$ is a distance $\frac{1}{2}(p+(a-b))$ along $T$
from $\phi(A)$.  Now the concatenation of
a geodesic path from $v_0$ to $\rho(A)$ followed
by the geodesic in $T$ from $\rho(A)$ to $x$ has
the same length $\frac{1}{2}(p+a+b)$ as the concatenation of
a geodesic path from $v_0$ to $\phi(A)$ followed
by the geodesic in $T$ from $\phi(A)$ to $x$, and
hence both of these concatenations
are geodesics from $v_0$ to $x$.
Since every other point
$y \in T$ lies on one of these paths,
we have $\ds(v_0,x) > \ds(v_0,y)$.

Similarly, let $z$ be the point in $\gamma(A)$
that is a distance $\frac{1}{2}(q+(b-a)) \le q$
from the endpoint $\rho(A)$ along the path $\gamma(A)$,
and hence a distance $\frac{1}{2}(q+(a-b))$ from $\phi(A)$.
The concatenation of a geodesic from $v_0$ to either
$\rho(A)$ or $\phi(A)$,
together with the geodesic along $\gamma(A)$ from that
endpoint to $z$, has length $\frac{1}{2}(q+a+b)$,
and every point $y$ in $\gamma(A)$ lies on one of these
path concatenations.  Hence for all $y \in \gamma(A)$,
we also have $\ds(v_0,x) = \frac{1}{2}(p+a+b)>\frac{1}{2}(q+a+b)
 \ge \ds(v_0,y)$.
\end{proof}

For a face $A$ of $SC(1)$, choose $\mathbb{Z}$ representatives
$\hat i(A,\rho(A))$ and $\hat i(A,\phi(A))$ of the
indices $i(A,\rho(A))$ and $i(A,\phi(A))$ from $\mathbb{Z}/n\mathbb{Z}$,
respectively,
satisfying $0 \le \hat i(A,\phi(A)) < \hat i(A,\rho(A)) \le n =l(w)$.
Similarly, for each vertex $v$ in $\partial A \setminus \gamma(A)$,
let $\hat i(A,v)$ be the representative of $i(A,v)$
satisfying $0 < \hat i(A,v) <  n$; from Lemma \ref{halflemma},
then $\hat i(A,\phi(A)) < \hat i(A,v) < \hat i(A,\rho(A))$.

Define $k_A := \frac{1}{2}[\hat i(A,\rho(A))+\hat i(A,\phi(A))+
(\ds(v_0,\rho(A))-\ds(v_0,\phi(A)))]$.
The proof above shows that the point $x_A$ lies at the
index $i(A,x_A)=k_A$ (mod $n\mathbb{Z}$)
if $x_A$ is a vertex,
otherwise $x_A$ lies at the midpoint of
the edge whose endpoints $y,z$
are the vertices with indices $i(A,y),i(A,z)$
given by $k_A \pm \frac{1}{2}$
(mod $n\mathbb{Z}$).

\begin{defn}\label{facetypedefn}
For any face $A$ of $SC(1)$, we define the associated triple
$ft(A) := (\hat i(A,\rho(A)),\hat i(A,\phi(A)),k_A)$.
We define an equivalence relation $\app$ on the
set of faces of $SC(1)$ by $A \app B$ if and only
$ft(A)=ft(B)$; in this case, we say that $A$ and $B$
have the same {\bf face type}.
Define an equivalence relation $\app$ on the set of
vertices of $SC(1)$ by $u \app v$ if and only if
$\Omega(u) \app \Omega(v)$ and
$i(\Omega(u),u) = i(\Omega(v),v)$.
Denote
the equivalence class of a vertex or face $z$ relative to $\app$ by $[z]$.
\end{defn}

Note that
there are only finitely
many face types, and similarly only finitely many $\app$-equivalence
classes of vertices.
For example, it follows from this definition that if $A$
is a face of $SC(1)$  that is attached to $\own(\bs(A))$ at the vertex
$\bs(A)$ in such a way that no edge of $A$ folds onto $\own(\bs(A))$,
then the triple for $A$ is $(n,0,n/2)$, and
$A \app F_1$.  Since $\own(v_0)$ is not a face of $SC(1)$,
the $\app$-equivalence class $[v_0]$ contains only the vertex $v_0$.

The following lemma will be used in the constructions
of a push-down automaton and a finite state automaton later
in this section.

\begin{lemma}\label{facetypelemma}
In $SC(1)$ let $u_1,u_2$ be vertices with $u_1 \app u_2$
and let $e_1=(u_1,x,v_1)$
be an edge.
Suppose that either

\noindent (i) $\own(u_1)=\own(v_1)$,

\noindent (ii) $(\own(u_1),\own(v_1)) \in E(\mathcal{D'})$, or

\noindent (iii)  $(\own(v_1),\own(u_1)) \in E(\mathcal{D'})$ and
$\bar\si(\own(u_1)) \app \bar\si(\own(u_2))$.

\noindent Then there is an edge $e_2=(u_2,x,v_2)$ in $SC(1)$ with
$v_1 \app v_2$ satisfying, respectively,

\noindent (i) $\own(u_2)=\own(v_2)$ and
$\hat i(\own(u_1),v_1)$ lies between  $\hat i(\own(u_1),u_1)$
and $k_{\own(u_1)}$ (inclusive) if and only if
$\hat i(\own(u_2),v_2)$ lies between  $\hat i(\own(u_2),u_2)$
and $k_{\own(u_2)}$.

\noindent (ii) $(\own(u_2),\own(v_2)) \in E(\mathcal{D'})$
and $\bar\si(\own(v_1)) \app \bar\si(\own(v_2))$, or

\noindent (iii)  $(\own(v_2),\own(u_2)) \in E(\mathcal{D'})$.
\end{lemma}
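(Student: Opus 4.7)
The plan is to handle each of the three cases separately, in each case using $u_1 \app u_2$ (which unpacks to $\own(u_1) \app \own(u_2)$ together with $i(\own(u_1),u_1)=i(\own(u_2),u_2)$) to transfer the local structure at $u_1$ across to $u_2$. Set $A:=\own(u_1)$ and $A':=\own(u_2)$; since $u_1,u_2 \neq v_0$, Lemma \ref{halflemma} places $u_1 \in \partial A \setminus \gamma(A)$ and $u_2 \in \partial A' \setminus \gamma(A')$, and because $ft(A)=ft(A')$ the cyclic word $w$ labels $\partial A$ and $\partial A'$ so that corresponding indices carry the same letter. This indexwise matching is the engine that converts the edge $e_1$ into its partner $e_2$ in every case.

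Case (i) is routine. Here $e_1$ is an edge of $\partial A$ whose label $x$ is determined by $w$ and $i(A,u_1)$ together with a choice of clockwise or counterclockwise direction out of $u_1$; I would take $e_2$ to be the edge of $\partial A'$ at $u_2$ going in the same direction, which carries the same label. Lemma \ref{halflemma}(3) together with $\own(v_1)=A$ puts $\hat i(A,v_1)$ strictly between $\hat i(A,\phi(A))$ and $\hat i(A,\rho(A))$; equality of face types transfers this to $v_2$ in $A'$, so $\own(v_2)=A'$ and hence $v_1 \app v_2$. The stated bi-conditional involving $k$ then follows at once from $k_A=k_{A'}$ and the equality of the relevant indices on both sides.

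Case (ii) is where the real work sits. Set $B:=\own(v_1)$; since $\own(v_1) \neq A$ we have $v_1 \notin A$, and this forces $u_1 \in \{\rho(B),\phi(B)\}$, because at any interior vertex of $\gamma(B)$ both neighbouring edges of $\partial B$ lie inside $\gamma(B)=A \cap B \subset A$. To build $e_2$ I must exhibit a child $B'$ of $A'$ incident to $u_2$ at the corresponding endpoint of $\gamma(B')$, and in addition establish $B \app B'$ and $\bs(B) \app \bs(B')$. The candidate $B'$ is the unique face of $SC(1)$ whose start vertex is the image of $\bs(B)$ under the index-preserving correspondence $\partial A \leftrightarrow \partial A'$; since every vertex of $SC(1)$ is the start vertex of exactly one face, this $B'$ is well-defined. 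The folding that determines $\gamma(B')$ depends only on how the letters of $w$ read from $\bs(B')$ in $\partial B'$ match those read from $\bs(B')$ in $\partial A'$, and these matches mirror the ones at $\bs(B)$ in $A$ because indices and letters correspond; combining Theorem \ref{thm:sc1}, Corollary \ref{ordercor}, and the sparse condition then yields $B \app B'$ and $\bs(B) \app \bs(B')$. The analog $e_2$ at $u_2$ (the edge of $\partial B'$ leaving $\gamma(B')$ at the endpoint corresponding to $u_1$) has label $x$, and Lemma \ref{halflemma}(3) then gives $\own(v_2)=B'$ and $v_1 \app v_2$. This existence-of-$B'$ step is the main obstacle of the whole argument.

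Case (iii) is the mirror of case (ii). Now $\own(v_1)$ is the parent of $A$, the edge $e_1$ lies in $A$, and $v_1 \in \{\rho(A),\phi(A)\}$ must be the endpoint of $\gamma(A)$ adjacent to $u_1$ in $\partial A$ (otherwise $v_1 \in \partial A \setminus \gamma(A)$ would force $\own(v_1)=A$). I would take $e_2$ to be the edge of $A'$ from $u_2$ to the analogously situated endpoint $v_2$ of $\gamma(A')$; label equality comes once more from $\partial A'$ being labeled by $w$. The added hypothesis $\bs(A) \app \bs(A')$ is precisely what supplies $\own(v_1)=\own(\bs(A)) \app \own(\bs(A'))=\own(v_2)$ together with equal indices of $v_1$ and $v_2$ in these parent faces, so $v_1 \app v_2$, and $(\own(v_2),A') \in E(\md)$ is immediate from $A'$ being a child of $\own(v_2)$ in $\md$.
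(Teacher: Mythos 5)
Your overall strategy---using $u_1 \app u_2$ to set up an index-preserving correspondence between $\partial\own(u_1)$ and $\partial\own(u_2)$ and transporting the edge $e_1$ across it case by case---is exactly the paper's, and your treatments of cases (i) and (iii) match the paper's proof in substance. But there is a genuine gap in case (ii), precisely at the step you flag as ``the main obstacle'' and then dispose of with an appeal to Theorem \ref{thm:sc1}, Corollary \ref{ordercor}, and the sparse condition. The conclusion $v_1 \app v_2$ in case (ii) requires $B \app B'$, i.e.\ $ft(B)=ft(B')$, and the third component of the face type is $k_B = \frac{1}{2}[\hat i(B,\rho(B))+\hat i(B,\phi(B))+(\ds(v_0,\rho(B))-\ds(v_0,\phi(B)))]$. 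The combinatorial correspondence of gluings does give you the first two components, but $k_B=k_{B'}$ is a statement about distances in $S\Gamma(1)$ from $v_0$ to the endpoints of the gluing paths of $B$ and $B'$, and none of the three facts you cite controls those distances; they are global metric quantities, not local combinatorial data of the foldings.

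The missing ingredient is Lemma \ref{furthest} (which you never invoke). The paper uses it to write $\ds(v_0,\rho(B))=\ds(v_0,x_{A})-\ds(x_{A},\rho(B))$ and likewise for $\phi(B)$, where $A=\own(u_1)$ and $x_A$ is the furthest point of $\partial A$ from $v_0$; since $\rho(B),\phi(B)$ lie on $\partial A\setminus\gamma(A)$ and every such point lies on one of the two geodesics from $v_0$ to $x_A$, the difference $\ds(v_0,\rho(B))-\ds(v_0,\phi(B))$ equals $|k_{A}-\hat i(A,\phi(B))|-|k_{A}-\hat i(A,\rho(B))|$, an expression depending only on $k_A$ and indices. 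Because $A \app A'$ gives $k_A=k_{A'}$ and the indices correspond, this difference is the same for $B'$, whence $k_B=k_{B'}$. Without some such argument propagating the metric part of the face type from parent to child, your case (ii) does not establish $v_1 \app v_2$, and consequently the well-definedness of the PDA transition function built on this lemma would be unproved.
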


\begin{proof}
Suppose first that $u_1=v_0$.  Then $u_1 \app u_2$ implies that
$u_2=v_0=u_1$, and the result of the lemma follows.  For the
remainder of the proof, we assume that $u_1 \neq v_0$, and
as a consequence $u_2 \neq v_0$.  Let $A_i$ be the face $\own(u_i)$
for $i=1,2$.  By definition of $u_1 \app u_2$, then
$A_1 \app A_2$ and $i(A_1,u_1)=i(A_2,u_2)$.

Suppose that (i) $\own(u_1)=\own(v_1)$ holds.
Then the edge $e_1$ lies in the face $A_1$.
The faces $A_1$ and $A_2$ are copies of the same polygon with
the same boundary label word $w$, and we have $i(A_1,u_1)=i(A_2,u_2)$,
hence there is an edge $e_2=(u_2,x,v_2)$ in the boundary of $A_2$
with $i(A_1,v_1)=i(A_2,v_2)$.
From the definition of $A_1 \app A_2$,
we have  $\hat i(A_1,\phi(A_1))=\hat i(A_2,\phi(A_2))$ and
$\hat i(A_1,\rho(A_1))=\hat i(A_2,\rho(A_2))$.
From Lemma \ref{halflemma}, the edge $e_1$ lies in
$\partial A_1 \setminus \gamma(A_1)$, and so
we have
$\hat i(A_1,\phi(A_1)) < \hat i(A_1,v_1) < \hat i(A_1,\rho(A_1))$.
Then $\hat i(A_2,\phi(A_2)) < \hat i(A_2,v_2) < \hat i(A_2,\rho(A_2))$,
and so $v_2$ lies in $\partial A_2 \setminus \gamma(A_2)$.
Applying the same lemma again gives $\own(v_2)=A_2$.
Then both $v_1 \app v_2$  and the betweenness condition follow directly.

Next suppose that (ii) $(\own(u_1),\own(v_1)) \in E(\mathcal{D'})$.
In this case, $B_1:=\own(v_1)$ is a face of $SC(1)$.
Since $A_1<B_1$, the edge $e_1$ lies in $B_1$,
the vertices $u_1$ and $\bar\si(B_1)$ (which may or may not be the
same point) both lie in $A_1 \cap B_1$, and
$v_1$ lies in $B_1 \setminus \gamma(B_1)$.
Let $B_2$ be the face of $SC(1)$ whose vertex $\bar \si(B_2)$
lies at the vertex of $\partial A_2$ satisfying
$i(A_2,\bar\si(B_2))=i(A_1,\bar\si(B_1))$.  Again using the
fact that the pairs of polygons $A_1,B_1$ and $A_2,B_2$
have the same boundary labels, the gluings of $B_2$ onto
$A_2$ correspond to the gluings of $B_1$ onto $A_1$.
Hence $u_2 \in A_2 \cap B_2$, and there is an edge $(u_2,x,v_2)$
in $B_2$ with $v_2 \notin A_2$.
Since $\own(u_2)=A_2$, we have $A_2<B_2$, and so
$(A_2,B_2) \in E(\mathcal{D'})$.
In addition, we have $\hat i(C_1,\rho(B_1))=\hat i(C_2,\rho(B_2))$ and
$\hat i(C_1,\phi(B_1))=\hat i(C_2,\phi(B_2))$ for $C_i \in \{A_i,B_i\}$ and
$\hat i(B_1,v_1)=\hat i(B_2,v_2)$.  Since
$B_1=\own(v_1)$, then $\hat i(B_1,v_1)$ lies
strictly between $\hat i(B_1,\phi(B_1))$ and $\hat i(B_1,\rho(B_1))$,
and hence $\hat i(B_2,v_2)$ lies strictly between
$\hat i(B_2,\phi(B_2))$ and $\hat i(B_2,\rho(B_2))$,
giving $B_2 = \own(v_2)$.
The property $\bar\si(\own(v_1)) \app \bar\si(\own(v_2))$ follows
immediately.
Now $A_1 \app A_2$ implies that
$k_{A_1}=k_{A_2}$.
Lemma \ref{furthest} shows
that $\ds(v_0,\rho(B_i))=\ds(v_0,x_{A_i})-\ds(x_{A_i},\rho(B_i))$
for $i=1,2$, and similarly for $\phi(B_i)$.  Then
$\ds(v_0,\rho(B_1))-\ds(v_0,\phi(B_1))=
\ds(x_{A_1},\phi(B_1))-\ds(x_{A_1},\rho(B_1))=
|k_{A_1}-\hat i(A_1,\phi(B_1))|-|k_{A_1}-\hat i(A_1,\rho(B_1))|$.
Since all of these numbers are the same if the subscript
1 is replaced by 2 everywhere, then we have
$\ds(v_0,\rho(B_1))-\ds(v_0,\phi(B_1))=\ds(v_0,\rho(B_2))-\ds(v_0,\phi(B_2))$.
This shows that $k_{B_1}=k_{B_2}$, which is the last item
needed to show that $B_1 \app B_2$.  Therefore $v_1 \app v_2$.

Finally, suppose that (iii) $(\own(v_1),\own(u_1)) \in E(\mathcal{D'})$ and
$\bar\si(\own(u_1)) \app \bar\si(\own(u_2))$.
Suppose further that $\bar\si(A_1)=v_0$.
Then $v_1=v_0$ and $A_1=\own(u_1)=F_1$.
In this case $\bar\si(\own(u_2))=v_0$, and so $A_1=A_2$, $u_1=u_2$,
and the lemma holds.

On the other hand, suppose that
$\bar\si(A_1)=\bar\si(\own(u_1)) \neq v_0$.
Then $E_i:=\own(\bar\si(A_i)$ is a face of $SC(1)$ for $i=1,2$,
and we also have $(E_2,A_2) \in \mathcal{D'}$ and $E_1=\own(v_1)$.
The definition of $\bar\si(A_1) \app \bar\si(A_2)$
implies that $E_1 \app E_2$ and
$i(E_1,\bar\si(A_1))=i(E_2,\bar\si(A_2))$.
Now the edge gluings in the folding of $A_1$ onto its parent face
$E_1$ and in the folding of $A_2$ onto $E_2$ must be the same.
The edge $e_1=(u_1,x,v_1)$ lies in $A_1$ with $u_1$ in
$\partial A_1 \setminus \gamma(A_1)$ and $v_1$ in $\gamma(A_1)$,
and there must be a corresponding edge $e_2=(u_2,x,v_2)$
in the face $A_2$.  Then $i(A_1,v_1)=i(A_2,v_2)$, and so
$v_2$ lies in $\gamma(A_2)$.  Hence $E_2=\own(v_2)$.
Finally the correspondence in edge gluings together with
$i(E_1,\bar\si(A_1))=i(E_2,\bar\si(A_2))$ imply
that $i(E_1,v_1)=i(E_2,v_2)$, and so $v_1 \app v_2$.
\end{proof}

Next we use the face type classes of vertices in $SC(1)$
to build a deterministic push-down automaton, following the notation
for a PDA in \cite[p.110]{MR83j:68002}.

\begin{defn}\label{pdadefn}
Let $\pdap=(Q,\Sigma,\Gamma,\delta,q_0,Z_0,F)$ be the
deterministic pushdown
automaton with state set $Q=\set{[v]\suchthat v\in V(SC(1))}$,
input alphabet $\Sigma=\LETTERS$, stack alphabet $\Gamma  =
\set{[v]\suchthat v\in V(SC(1))}$, initial state $q_0=[v_0]$,
initial stack symbol $Z_0=[v_0]$, final (accept) state
$F=\set{[v_0]}$, and transition function the partial function
$\delta\colon Q\times \Sigma \times \Gamma \to Q\times
\Gamma^{*}$ for which $\delta([u],x,[t])$ is defined only
if there is an edge
$(u,x,v)$ for some vertex $v$ in $SC(1)$, by
\[
\delta([u],x,[t]):=\left\{
\begin{array}{ll}
([v],[t]) &
\mbox{if $\Omega(u)=\Omega(v)$} \\
([v],[\sigma(\Omega(v))][t]) &
\mbox{if $(\Omega(u),\Omega(v))\in E(\mathcal{D'})$} \\
([v],\epsilon) &
\mbox{if $(\Omega(v),\Omega(u))\in E(\mathcal{D'})$,
$[t]=[\bar \sigma(\Omega(u))]$}
\end{array} \right. \]
\end{defn}

The undefined transitions for $\delta$ are viewed as
going to a fail state.
Note that for an edge $(u,x,v)$ in $SC(1)$
satisfying $v_0 \neq v=\bar\si(\own(u))=\rho(\own(u))=\phi(\own(u))$, so
that the face $\own(u)$ is attached at $v$ but no edges are glued,
the last case of the definition of $\delta$ can
be split into two subcases.
In this situation we have $\own(u) \app F_1$, and there is an edge $(u_1,x,v_0)$
in $F_1$ with $u_1 \app u$.  If $[t]=[v] \neq [v_0]$, then
$\delta([u],x,[t]):=([v],\epsilon)$, but if $[t]=[v_0]$, then
$\delta([u],x,[t]):=([v_0],\epsilon)$.
The fact that $\delta$ is well-defined
follows directly from Lemma \ref{facetypelemma}.

An instantaneous description $(\alpha,z,\beta)$ for the PDA
$\pdap$ consists of the current state $\alpha \in Q$ of the
machine, the word $z \in (X \cup X^{-1})^*$ that remains to be
read, and the current contents $\beta \in \Gamma^*$ of the stack,
where the first letter of $\beta$ is the ``top'' of the stack. We
write $(\alpha,yz,\beta) \vdash^* (\alpha',z,\beta')$ if, when $y$
is read in starting from $(\alpha,yz,\beta)$, the PDA reaches
$(\alpha',z,\beta')$, and write $\vdash$ when a single letter $y
\in X \cup X^{-1}$ is read.

Define a function $\beta:V(SC(1)) \rightarrow \Gamma^*$ as
follows.
Given any vertex $v$ in $SC(1)$, let $v_0,F_1,...,F_m=\own(v)$ be
the labels of the vertices along the geodesic path
in the tree $\mathcal{D'}$ from $v_0$ to $\own(v)$.
Then $\beta(v)$ is the associated word over the stack alphabet
given by
$\beta(v):=[\bar\si(F_m)] \cdots [\bar\si(F_1)][v_0]$.

\begin{prop}\label{pdathm}
Let $w$ be sparse, and let $SC(1)$ be the \sch\ complex of 1 for
$M=\Inv{X}{w=1}$.  Let $\alpha \in Q$, $y,z \in (X \cup X^{-1})^*$,
and $\beta \in \Gamma^*$.  Then
$([v_0],yz,[v_0])\vdash^* (\alpha,z,\beta)$ if and only if
$y$ labels an edge path in $SC(1)$ starting at $v_0$ and
$\alpha=[v]$ and $\beta=\beta(v)$ where $v$ is the end
vertex of this path.
\end{prop}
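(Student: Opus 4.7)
The plan is to prove both directions of the biconditional simultaneously by induction on the length of the word $y$. The base case $y=\epsilon$ is immediate: the instantaneous description $([v_0],z,[v_0])$ is the starting configuration, the empty path starts and ends at $v_0$, and by definition $\beta(v_0)=[v_0]$. For the inductive step, I would decompose $y=y'x$ with $x\in\LETTERS$ a single letter, apply the induction hypothesis to $y'$ to get a vertex $u$ with $([v_0],y'xz,[v_0])\vdash^*([u],xz,\beta(u))$, and then analyze the single transition $([u],xz,\beta(u))\vdash(\alpha,z,\beta)$ together with the existence or nonexistence of a corresponding edge $(u,x,v)$ in $SC(1)$.

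The heart of the argument is a case analysis based on the relationship between $\own(u)$ and $\own(v)$, which by Corollary~\ref{cor:sc1} and the Order Corollary~\ref{ordercor} must be one of: (i) $\own(u)=\own(v)$, (ii) $\own(v)$ is a child of $\own(u)$ in $\md$, or (iii) $\own(u)$ is a child of $\own(v)$ in $\md$ (since every edge of $SC(1)$ lies in at most two faces, namely $\own(u)$ and possibly one child of it, and the other endpoint's $\own$ value is forced accordingly via Lemma~\ref{halflemma}). In each case, I would check that the definition of $\delta$ matches both the existence of the edge and the required stack update: in (i) the geodesic path in $\md$ from $v_0$ to $\own(u)=\own(v)$ is unchanged, so $\beta(v)=\beta(u)$ and $\delta$ leaves the stack alone; in (ii) the geodesic extends by one vertex, so $\beta(v)=[\bar\si(\own(v))]\beta(u)$, matching the push; in (iii) the geodesic shrinks by one vertex, so $\beta(u)=[\bar\si(\own(u))]\beta(v)$, and the top of $\beta(u)$ is exactly the symbol $[\bar\si(\own(u))]$ that $\delta$ requires and pops. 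The corner case where the pop exposes $[v_0]$ (when $v=v_0$) is handled by the subcase split noted after Definition~\ref{pdadefn}.

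The main obstacle is the forward direction at the inductive step: given that the PDA transition $\delta([u],x,[t])$ fires, we must produce an actual edge in $SC(1)$ starting at $u$ (not merely at some representative of $[u]$), and we must verify that $[v]$ and $\beta(v)$ agree with the transition's output. This is exactly where Lemma~\ref{facetypelemma} is essential: it guarantees that the three classes of edges at a vertex $u_1$ transfer to any equivalent vertex $u_2$ with consistent face types on both endpoints, so that the image $([v],\beta')$ computed from the representative used in the definition of $\delta$ really equals $([v],\beta(v))$ for the vertex $v$ adjacent to the particular $u$ we care about. Given that lemma, together with the observation that the stack top of $\beta(u)$ is always $[\bar\si(\own(u))]$ when $u\neq v_0$ (and $[v_0]$ otherwise), the verification in each case is routine.
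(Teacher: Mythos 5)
Your proposal is correct and follows essentially the same route as the paper: induction on $l(y)$ with the decomposition $y=y'x$, the same three-way case analysis on the relation between $\own(u)$ and $\own(v)$ in $\md$ with the corresponding stack bookkeeping, and the same identification of Lemma~\ref{facetypelemma} (together with the observation that the top of $\beta(u)$ is $[\bar\si(\own(u))]$ for $u\neq v_0$) as the tool that transfers the edge from the representative used in defining $\delta$ to the actual vertex $u$. The only cosmetic difference is that you run both directions of the biconditional in one induction, whereas the paper treats the forward and reverse implications separately.
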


\begin{proof}
First we prove the forward implication, by induction on the
length of $y$.  If $l(y)=0$,
then $y=\epsilon$
and $([v_0],\epsilon z,[v_0])\vdash^* (\alpha,z,\beta)$
implies that $\alpha=[v_0]$, and $\beta=[v_0]=\beta(v_0)$.
The path starting at $v_0$ labeled by $y=\epsilon$ ends at $v=v_0$,
as required.

Now, suppose that the forward implication holds for any word
$\tilde y$ with $0 \le l(\tilde y)<l(y)$, and write
$y=y'x$ with $x \in \LETTERS$.
Suppose that $([v_0],yz,[v_0]) \vdash^* (\alpha,z,\beta)$.
Then we have
$([v_0],y'xz,[v_0])\vdash^*(\alpha',xz,\beta')\vdash(\alpha,z,\beta)$
for some $\alpha' \in Q$ and $\beta'\in \Gamma^*$.
By induction, the word $y'$ labels a path $\pi'$ in $SC(1)$ starting
at $v_0$, and $\alpha'=[u]$ and $\beta'=\beta(u)$ where $u$ is the
ending vertex of the path $\pi'$.

Since $(\alpha',xz,\beta')\vdash(\alpha,z,\beta)$, the
transition function $\delta$ is defined on the triple
$(\alpha',x,\gamma)$, where $\gamma$ is the
first letter of the word $\beta(u) \in \Gamma^*$.
This means that there is a representative $\tilde u$
of the $\app$-class $\alpha'$ such that there is an edge of the form
$e=(\tilde u,x,v)$ in $SC(1)$ for some vertex $v$, and either
(i) $\own(\tilde u)=\own(v)$,
(ii) $(\own(\tilde u),\own(v)) \in E(\mathcal{D'})$, or
(iii) $\gamma=[\bar\si(\own(\tilde u))]$
and $(\own(v),\own(\tilde u))  \in E(\mathcal{D'})$.
In cases (i) and (ii), Lemma \ref{facetypelemma} shows that we
may take $\tilde u=u$.  In case (iii),
notice that the first letter $\gamma$ of $\beta(u)$
satisfies $\gamma=[\bar\si(\own(u))]$ if $\own(u) \neq v_0$, and
$\gamma=[v_0]$ if $\own(u)=v_0$.  However, if $\own(u)=v_0$, then
$u=v_0$, and since $[\tilde u]=\alpha'=[u]$, then $\tilde u=v_0$,
contradicting the existence of the edge
$(\own(v),\own(\tilde u))$ in $\mathcal{D'}$.
Then $\own(u) \neq v_0$, and so we also
may take $\tilde u=u$ in this case.

Then in all three cases, the path $\pi'$ followed by
the edge $e$ is a path in $SC(1)$ labeled by the
word $y$ starting at $v_0$ and ending at the vertex $v$.
Moreover, we have $\alpha=[v]$.

In case (i), $\delta(\alpha',x,\gamma)=([v],\gamma)$,
and the stack word $\beta=\beta'=\beta(u)$ is unchanged by
this transition.  Since $\own(u)=\own(v)$, then $\beta=\beta(v)$.

In case (ii), $\delta(\alpha',x,\gamma)=([v],[\bar\si(\own(v))]\gamma)$,
and we have $\beta=[\bar\si(\own(v))]\beta(u)$.  Since
$(\own(u),\own(v)) \in E(\mathcal{D'})$, we again
have $\beta=\beta(v)$.

In case (iii), $\delta(\alpha',x,\gamma)=([v],\epsilon)$.
Now $(\own(v),\own(u))  \in E(\mathcal{D'})$ implies that
$\beta(u)=[\bar\si(\own(u))]\beta(v)$, and we have $\beta=\beta(v)$
in this case as well.

This completes
the proof of the forward implication.

For the reverse implication, we again induct on the length $l(y)$.
If $l(y)=0$, the as before $y=\epsilon$ labels a path from $v_0$
to $v_0$, and so $([v_0],yz,[v_0]) \vdash^* (\alpha,z,\beta)$
where $\alpha=[v_0]$ and $\beta=[v_0]=\beta(v_0)$.

Suppose again that $l(y)>0$ and write $y=y'x$ with $x \in \LETTERS$.
By hypothesis, $y$ labels a path in $SC(1)$ from $v_0$; let $v$
be the vertex at the end of this path, and let $u$ be the penultimate
vertex; that is, $u$ is at the end of the path labeled by $y'$.
By induction we have $([v_0],yz,[v_0]) \vdash^* ([u],xz,\beta(u))$.
The definition of $\delta$ then shows that
$([v_0],yz,[v_0]) \vdash^* ([v],z,\beta(v))$.
\end{proof}

We can now prove Theorems \ref{cfree} and \ref{geodesics}.

\medskip

\noindent {\bf Proof of Theorem \ref{cfree}}

For a word $y \in \WORDS$, we have
 $y=1$ in $M$ if and only if $y$ labels an edge path
from $v_0$ to $v_0$ in $S\Gamma(1)$.
Proposition \ref{pdathm} shows that the latter holds if and only if
$([v_0],y,[v_0])\vdash^* ([v_0],\epsilon,\beta)$ for some $\beta$; that
is, exactly when the PDA $\pdap$ finishes in the accept state $[v_0]$.
Thus, the set of words representing the
identity element in $M$ is a deterministic context-free language.

The word $y$ is in the
language of words related to $1$ in $M$ by Green's relation $\gR$
if and only if $y$ labels a path starting at $v_0$ in $S\Gamma(1)$, which
holds if and only if
$([v_0],y,[v_0])\vdash^* (\alpha,\epsilon,\beta)$ for some $\alpha\in Q$ and
$\beta\in \Gamma^*$.  Let $\pdap'$ be the PDA $\pdap$ with the
the set of final (accept)
states changed to $F=Q$.  Then we have $y$ is accepted by $\pdap'$
if and only if $y$ is in the $\gR$-equivalence class $\gR_1$ of 1.  Hence
the set of words representing an element
of $\gR_1$ in $M$ is also a deterministic context-free
language.
\hfill $\Box$

\medskip

\noindent {\bf Proof of Theorem \ref{geodesics}}

Let $(Q,\Sigma,\delta,q_0,F)$ be the finite state automaton with
state set $Q=\set{[v]\suchthat v\in V(SC(1))}$, input alphabet
$\Sigma=\LETTERS$, initial state $q_0 = [v_0]$, final (accept)
states $F = Q$, and transition function
 the partial function $\delta\colon Q\times \Sigma \to
Q$,  defined by $\delta([u],x):=[v]$ if there is an edge
$(u,x,v)$ in $SC(1)$ and either

\noindent (i) $\Omega(u)=\Omega(v)$ and either
$\hat i(\Omega(u),u) < \hat i(\Omega(u),v) \leq k_{\own(u)}$ or
$\hat i(\Omega(u),u) >
\hat i(\Omega(u),v) \geq k_{\own(u)}$, or

\noindent (ii) $(\Omega(u),\Omega(v))\in E(\mathcal{D'})$

\noindent Lemma \ref{facetypelemma} shows that
this transition function is well-defined.

Let $p$ be an arbitrary path in $SC(1)$ starting at $v_0$.  Let
$v_0,v_1,...,v_m$ be the sequence of consecutive vertices
traversed by $p$, and let $A_i:=\own(v_i)$.
Note that the path $p$ is geodesic if and only if
$\ds(v_0,v_{i-1})>\ds(v_0,v_i)$ for all $i$.

If $(A_i,A_{i-1})\in E(\mathcal{D'})$, then
the Geodesic Theorem \ref{geodesicprop} says that
$p$ is not a geodesic.
If $(A_{i-1},A_i)\in E(\mathcal{D'})$, then
$v_{i} \in \partial A_i \setminus \gamma(A_i)=\partial A_i \setminus A_{i-1}$,
and the vertex $v_{i-1}$ must be one of the endpoints
$\rho(A_i),\phi(A_i)$ of the gluing path
of $A_i$ onto $A_{i-1}$; let $u_i$ be the other.
The Geodesic Theorem \ref{geodesicprop}
says that any geodesic from $v_0$ to $v_i$ must
pass through one of the points $v_{i-1},u_i$.  Since
$\ds(v_{i-1},v_i)=1$, then Lemma \ref{halflemma}(1) shows
that such a geodesic must also pass through $v_{i-1}$.
Hence $\ds(v_0,v_{i})>\ds(v_0,v_{i-1})$.
Finally, if $A_{i-1}=A_i$, then $v_{i-1}$
and $v_i$ are both vertices in $\partial A_i \setminus \gamma(A_i)$.
By Lemma \ref{furthest},
it follows that $\ds(v_0,v_i)>\ds(v_0,v_{i-1})$ if and only if either
$i(A_i,v_{i-1})< i(A_i,v_i) \leq k_{A_i}$ or
$i(A_i,v_{i-1})> i(A_i,v_i) \geq k_{A_i}$.

In the proof of Theorem \ref{cfree}, we showed that
a word $y$ labels a path starting at $v_0$ in $SC(1)$ if and only if it
is accepted by the PDA $\pdap'$, which is the PDA in Definition
\ref{pdadefn} but for which all states in $Q$ are final (accept)
states.  Note that the only transitions of this PDA which
utilize the stack in determining the next state are those
associated with edges from $u$ to $v$ with
$(\own(v),\own(u)) \in E(\mathcal{D'})$.  Combining this with
the previous paragraph, then,
the finite state automaton defined above
is precisely the underlying finite state
automaton of the PDA $\pdap'$ consisting only of transitions associated
with edges $(u,x,v)$ such that $d(v_0,v)>d(v_0,u)$. Thus this
finite state automaton accepts precisely the words which label
geodesic paths in $SC(1)$.  \hfill $\Box$
\medskip

{\bf Remark 1.} The minimized form of the finite state automaton
defined in the proof of Theorem \ref{geodesics} is the automaton
of cone types of $S\Gamma(1)$. As an example, S. Haataja showed
that the automaton of cone types for $S\Gamma(1)$ for the sparse word $w
= aba^{-1}b^{-1}cdc^{-1}d^{-1}$ corresponding to the surface group
of genus $2$ has $19$ cone types (unpublished manuscript). A
description of Haataja's example may be found in Meakin's survey
article \cite{Me07}.

{\bf Remark 2.} Descriptions of an
iterative construction of the PDA in
Definition \ref{pdadefn} and an  implementation of the algorithm
for solving the word problem is provided in S. Lindblad's PhD
thesis \cite{lindphd}.  The software is available from
http://www.math.unl.edu/~shermiller2/lindblad/ .

{\bf Remark 3.} In their paper \cite{MR2001m:20107}, Ivanov,
Margolis and Meakin show that the word problem for the inverse
monoid $M=\Inv{X}{w=1}$ corresponding to a cyclically reduced word
$w$ is solvable if the membership problem for the submonoid
 of the corresponding one-relator group $G = Gp \langle X | w = 1 \rangle$
generated by the prefixes of $w$ is solvable. However as far as we
are aware, it is not known whether the prefix membership problem
for this submonoid of $G$ is equivalent to the word problem for
$M$ in general. In particular, it is not known whether this prefix
membership problem for $G$ is solvable if $w$ is a sparse word.

\newcommand{\etalchar}[1]{$^{#1}$}
\providecommand{\bysame}{\leavevmode\hbox to3em{\hrulefill}\thinspace}
\providecommand{\MR}{\relax\ifhmode\unskip\space\fi MR }
\providecommand{\MRhref}[2]{%
  \href{http://www.ams.org/mathscinet-getitem?mr=#1}{#2}
}
\providecommand{\href}[2]{#2}


\begin{thebibliography}{ECH{\etalchar{+}}92}

\bibitem[BMM94]{MR94j:20057}
Birget, J.-C., Margolis, S.~W. and Meakin, J.~C., \emph{The word
  problem for inverse monoids presented by one idempotent relator}, Theoret.
  Comput. Sci. \textbf{123} (1994), no.~2, 273--289. %

\bibitem[CP67]{CP} Clifford, A.~H., Preston, G.~P. {\em The Algebraic
Theory of Semigroups}, Math. Surveys 7, Amer. Math. Soc., Providence
1961 (Vol I) and 1967 (Vol. II).

%
%

%
%
%
%

\bibitem[HU79]{MR83j:68002}
Hopcroft, J.~E. and Ullman, J.~D., \emph{Introduction to automata theory,
 languages, and computation}, Addison-Wesley Publishing Co., Reading, Mass.,
  1979, Addison-Wesley Series in Computer Science. %

%
%
  %
  %
  %

\bibitem[IMM01]{MR2001m:20107}
Ivanov,S.~V.,  Margolis, S.~W. and Meakin, J.~C., \emph{On one-relator inverse
  monoids and one-relator groups}, J. Pure Appl. Algebra \textbf{159} (2001),
  no.~1, 83--111. %

\bibitem[Law98]{MR2000g:20123}
Lawson, M.~V., \emph{Inverse semigroups}, World Scientific Publishing Co.
  Inc., River Edge, NJ, 1998.
%
  %

\bibitem[Lin03]{lindphd}
Lindblad, S~P., \emph{Inverse monoids presented by a single
sparse relator}, PhD Thesis, Univ. of Nebraska-Lincoln, 2003.

%
%
%
%

\bibitem[Me07]{Me07}
Meakin, J~C.,  \emph{Groups and semigroups: connections and
contrasts}, Proceedings, Groups St Andrews 2005, London Math. Soc.
Lecture Note Series 340, Vol 2, 2007, 357-400.

%
%
%
%

\bibitem[MMS05]{MMSu} Margolis, S., Meakin, J. and \v Suni\'k, Z., Distortion
functions and the membership problem for submonoids of groups and
monoids, {\em Contemporary Mathematics} {\bf 372} (2005)
109-129.

\bibitem[Mun74]{MR50:13328}
Munn, W.~D., \emph{Free inverse semigroups}, Proc. London Math. Soc.
  \textbf{29} (1974), no. 3, 385--404. %

%
%
  %
  %

%
%
%
%

%
%
  %

%
%

%
%
%
%

\bibitem[Step90]{MR91g:20083}
Stephen, J.~B., \emph{Presentations of inverse monoids}, J. Pure
Appl. Algebra
  \textbf{63} (1990), no.~1, 81--112. %

\bibitem[Step93]{MR93k:20095}
Stephen, J.~B., \emph{Inverse monoids and rational subsets of
related groups},
  Semigroup Forum \textbf{46} (1993), no.~1, 98--108. %

%
%
  %
 %

\bibitem[Stei03]{MR1971670}
Steinberg, B.,\emph{A topological approach to inverse and
regular semigroups},
  Pacific J. Math. \textbf{208} (2003), no.~2, 367--396. %

%
%
  %

%
%
 %
  %

\end{thebibliography}
\end{document}